\def\?[#1]{\textbf{[#1]}\marginpar{\Large{\textbf{??}}}}
\let\epsilon=\varepsilon 
\let\phi=\varphi
\newcommand{\D}{{\mathcal D}}
\newcommand{\RR}{{\mathbb R}}
\newcommand{\UU}{{\mathcal U}}
\newcommand{\CC}{{\mathbb C}}
\newcommand{\CI}{{{\mathcal C}^\infty}}
\newcommand{\wbar}{\overline{w}}
\newcommand{\xbar}{\overline{x}}
\newcommand{\ybar}{\overline{y}}
\newtheorem{thm}{Theorem}
\newtheorem{prop}{Proposition}
\numberwithin{equation}{section}
\numberwithin{prop}{section}
\numberwithin{thm}{section}
\DeclareMathOperator{\Hol}{Hol}
\let\Im=\Imag
\let\Re=\Real
\DeclareMathOperator{\Top}{Top}
\DeclareMathOperator{\vc}{vc}
\title{Boundedness of metaplectic Toeplitz operators and Weyl symbols}
\author{Haoren Xiong}
\email{haorenxiong@math.ucla.edu}
\address{Department of Mathematics, University of California,
Los Angeles, CA 90095, USA}
\begin{document}

\begin{abstract}
We study Toeplitz operators on the Bargmann space, whose Toeplitz symbols are exponentials of complex inhomogeneous quadratic polynomials. Extending a result by Coburn--Hitrik--Sj\"{o}strand \cite{coburn2023characterizing}, we show that the boundedness of such Toeplitz operators implies the boundedness of the corresponding Weyl symbols, thus completing the proof of the Berger--Coburn conjecture in this case. We also show that a Toeplitz operator is compact precisely when its Weyl symbol vanishes at infinity in this case.
\end{abstract}

\maketitle

\section{Introduction and statement of results}
\label{sec:intro}

The Berger--Coburn conjecture \cite{berger1994heat}, \cite{Coburn2019fock}, has been a long standing open problem in the theory of Toeplitz operators, claiming the equivalence between the boundedness of a Toeplitz operator on the Bargmann space and the boundedness of its Weyl symbol. In \cite{coburn2019positivity}, Coburn--Hitrik--Sj\"{o}strand have built connections between Toeplitz operators and metaplectic Fourier integral operators (FIOs) in the complex domain, by considering positivity of canonical transformations. The point of view of complex FIOs was used in \cite{coburn2019positivity} to show that the boundedness of a special class of Toeplitz operators whose Toeplitz symbols are exponentials of complex quadratic forms, is implied by the boundedness of their Weyl symbols, thus proving the sufficiency part of the conjecture in this case. In a follow up paper \cite{coburn2021weyl}, the proof of the sufficiency part of the conjecture was extended to the case of Toeplitz symbols given by exponentials of inhomogeneous quadratic polynomials. In a more recent work \cite{coburn2023characterizing}, the necessity part of the Berger--Coburn conjecture for Toeplitz operators with symbols that are exponentials of complex quadratic forms has been established, showing that the boundedness of such Toeplitz operators implies the boundedness of their Weyl symbols. The purpose of this work is to extend the result of \cite{coburn2023characterizing} to the case where Toeplitz symbols are exponentials of inhomogeneous quadratic polynomials. In this section, we shall introduce the assumptions and state the main results of this work.

\noindent
Let $\Phi_0$ be a strictly plurisubharmonic quadratic form on $\CC^n$ and let us define
\begin{equation}
\label{eqn:LambdaPhi0}
    \Lambda_{\Phi_0} = \bigg\{ \bigg(x,\frac{2}{i}\frac{\partial\Phi_0}{\partial x}(x)\bigg);\ x\in\CC^n \bigg\}\subset\CC_x^n\times\CC_\xi^n =\CC^{2n}.
\end{equation}
The real linear subspace $\Lambda_{\Phi_0}$ is I-Lagrangian and R-symplectic, in the sense that the restriction of the complex symplectic form on $\CC^{2n}$ to $\Lambda_{\Phi_0}$ is real and non-degenerate.

\noindent
Associated to the quadratic form $\Phi_0$ is the Bargmann space
\begin{equation}
\label{eqn:Bargmann}
    H_{\Phi_0}(\CC^n) = L^2(\CC^n, e^{-2\Phi_0}L(dx))\cap\Hol(\CC^n),
\end{equation}
where $L(dx)$ is the Lebesgue measure on $\CC^n$. We denote the orthogonal projection by
\begin{equation}
\label{eqn:orthoProj}
    \Pi_{\Phi_0} : L^2(\CC^n, e^{-2\Phi_0}L(dx)) \to H_{\Phi_0}(\CC^n).
\end{equation}
Let $q$ be an inhomogeneous quadratic polynomial on $\CC^n$ with complex coefficients. We shall assume that
\[
\Re q_2(x) < \Phi_0^{\text{herm}}(x): = (\Phi_0(x)+\Phi_0(ix))/2,\quad x\neq 0,
\]
where $q_2$ is the principal part of $q$, i.e. a homogeneous polynomial of degree $2$ in $x$, $\xbar$.

\noindent
In this note, following \cite{coburn2023characterizing} and \cite{coburn2019positivity}, we shall be concerned with Toeplitz operators:
\[
    \Top(e^q) = \Pi_{\Phi_0}\circ e^q \circ \Pi_{\Phi_0} : H_{\Phi_0}(\CC^n) \to H_{\Phi_0}(\CC^n).
\]
Such operators can also be represented using the Weyl quatization
\begin{equation}
\label{eqn:Top=Weyl}
    \Top(e^q) = a^\text{w} (x,D_x),
\end{equation}
see \cite[Chapter 13]{zworski2012semiclassical}, where the Weyl symbol $a\in\CI(\Lambda_{\Phi_0})$ is given by
\begin{equation}
\label{eqn:Weyl symbol}
    a\bigg(x,\frac{2}{i}\frac{\partial\Phi_0}{\partial x}(x)\bigg) = \bigg( \exp\bigg( \frac{1}{4}(\Phi_{0,x\xbar}'')^{-1} \partial_x\cdot\partial_{\xbar} \bigg) e^q\bigg)(x),\quad x\in\CC^n.
\end{equation}
Now we state the first main result of this work.
\begin{thm}
\label{thm:1}
Let $\Phi_0$ be a strictly plurisubharmonic quadratic form on $\CC^n$ and let $q$ be an inhomogeneous complex valued quadratic polynomial on $\CC^n$. Assume that the principal part $q_2$ of $q$ satisfies
\begin{equation}
\label{assumption 1}
\Re q_2(x) < \Phi_{\emph{herm}}(x): = (\Phi_0(x)+\Phi_0(ix))/2,\quad x\neq 0,
\end{equation}
and that
\begin{equation}
\label{assumption 2}
    \det\partial_x\partial_{\xbar}(2\Phi_0 - q_2) \neq 0.
\end{equation}
Then the Toeplitz operator $\Top(e^q):H_{\Phi_0}(\CC^n)\to H_{\Phi_0}(\CC^n)$ is bounded if and only if the corresponding Weyl symbol $a\in\CI(\Lambda_{\Phi_0})$ given by \eqref{eqn:Weyl symbol} satisfies $a\in L^\infty (\Lambda_{\Phi_0})$.
\end{thm}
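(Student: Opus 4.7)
The sufficiency direction is already available from \cite{coburn2021weyl}, so my focus is on the necessity: assuming that $\Top(e^q)$ is bounded on $H_{\Phi_0}(\CC^n)$, I must conclude that the Weyl symbol $a$ in \eqref{eqn:Weyl symbol} belongs to $L^\infty(\Lambda_{\Phi_0})$. The plan is to reduce to the homogeneous case of \cite{coburn2023characterizing} by conjugating away the lower-order part of $q$, using metaplectic complex FIOs associated to affine canonical transformations.

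Split $q=q_2+q_1+q_0$ into its quadratic, linear, and constant parts. First, I would compute $a$ explicitly from \eqref{eqn:Weyl symbol}: applying the second-order constant-coefficient operator $\exp\bigl(\tfrac{1}{4}(\Phi_{0,x\xbar}'')^{-1}\partial_x\cdot\partial_{\xbar}\bigr)$ to the Gaussian $e^q$ by a formal Gaussian evaluation, one obtains
\[
a(x,\xbar)=C\,\exp\bigl(P(x,\xbar)\bigr),
\]
where $P$ is again an inhomogeneous quadratic polynomial and $C\neq 0$. Assumption \eqref{assumption 2} is precisely what makes the determinant arising in this Gaussian evaluation nonvanishing, so $P$ is a genuine polynomial rather than a singular expression. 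The quadratic part $P_2$ of $P$ depends only on $q_2$ and $\Phi_0$, and its real part governs whether $a\in L^\infty$.

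Next, I would reduce the inhomogeneous problem to a purely quadratic one. The linear part $q_1$ of $q$ is associated with an affine translation of $\Lambda_{\Phi_0}$, and I would construct metaplectic FIOs $U_\pm:H_{\Phi_0}\to H_{\Phi_0}$, bounded and invertible, realizing this translation at the level of operators in the spirit of \cite{coburn2019positivity}. By choosing $U_\pm$ to shift the critical point of $q$ on $\Lambda_{\Phi_0}$ to the origin, one obtains
\[
U_{+}\circ\Top(e^q)\circ U_{-}=c\,\Top(e^{\tilde q_2})
\]
for a nonzero constant $c$ and a purely quadratic form $\tilde q_2$ still satisfying the strict inequality \eqref{assumption 1}. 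Boundedness of $\Top(e^q)$ therefore transfers to boundedness of $\Top(e^{\tilde q_2})$. Invoking \cite[the main result]{coburn2023characterizing}, the Weyl symbol $\tilde a$ of $\Top(e^{\tilde q_2})$ lies in $L^\infty(\Lambda_{\Phi_0})$. Finally, I would track the action of $U_\pm$ on Weyl symbols: since complex metaplectic FIOs act by composition with their underlying affine canonical transformation, $a$ is obtained from $\tilde a$ by such an affine pull-back (modulo a nonzero multiplicative constant), so $\tilde a\in L^\infty$ forces $a\in L^\infty$.

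The main obstacle is Step 2: identifying the correct affine canonical transformation that intertwines $\Top(e^q)$ with a homogeneous Toeplitz operator, and verifying that the corresponding complex FIOs are bounded and invertible on $H_{\Phi_0}$. The framework of \cite{coburn2019positivity, coburn2023characterizing} is developed for linear canonical transformations; the novelty here is to extend the positivity and boundedness analysis to affine maps, showing that the shift induced by $q_1$ can indeed be absorbed into bounded invertible factors on $H_{\Phi_0}$. Assumption \eqref{assumption 2} enters crucially in guaranteeing that the shift is uniquely determined (the critical point of $q$ on $\Lambda_{\Phi_0}$ exists and is nondegenerate) and that the resulting $\tilde q_2$ falls within the scope of \cite{coburn2023characterizing}.
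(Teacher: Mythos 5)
Your proposal takes a genuinely different route from the paper's, and it runs into a gap at precisely the step you flag as ``the main obstacle.'' The plan is to conjugate $\Top(e^q)$ by bounded invertible metaplectic FIOs $U_\pm$ so as to kill the linear part of $q$ and then quote the homogeneous result of \cite{coburn2023characterizing}. But the linear part of $q$ is $q_1(x) = \alpha\cdot x + \beta\cdot\xbar$ with $\alpha, \beta\in\CC^n$ \emph{independent}, i.e.\ $2n$ complex parameters. The translations that act unitarily on $H_{\Phi_0}(\CC^n)$ are exactly those tangent to $\Lambda_{\Phi_0}$, a \emph{real} $2n$-parameter family. Conjugation by such operators can therefore not, in general, annihilate $q_1$: one is trying to solve $2n$ complex (i.e.\ $4n$ real) equations with only $2n$ real free parameters. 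To kill $q_1$ you would need to translate by the critical point of the \emph{polarization} $Q$ of $q$, which lives in $\CC^{2n}$ and typically does not lie on the anti-diagonal $\{\theta=\ybar\}$. Such a complex phase-space translation quantizes to an operator whose boundedness on $H_{\Phi_0}$ is itself conditional on exactly the kind of positivity information you are trying to establish, so the reduction is circular.

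The paper avoids this by never attempting a conjugation. It writes the affine canonical transformation $\kappa$ of $\Top(e^q)$ as $\kappa_l\circ\kappa_q$ with $\kappa_q$ linear and $\kappa_l$ a translation by the Hamilton vector of a complex linear function $m_l$ (see \eqref{eqn:kappa factorization}--\eqref{eqn:m_l}), and then extracts constraints on \emph{both} pieces from boundedness. Acting on the normalized coherent states $k_w$ and using \eqref{eqn:Top(eQ)kw norm} gives not only the positivity inequality \eqref{eqn:f_q inequality} for the quadratic part (this is what would feed into the homogeneous argument), but also the new constraint \eqref{eqn:f_l condition} on the linear part. Proposition~\ref{prop:m_l vanishes on intersection} then converts \eqref{eqn:f_l condition} into the vanishing of $\Im m_l$ on $\Lambda_{\Phi_1}\cap\Lambda_{\Phi_0}$, and matching this with the factorization $\tilde\kappa=\kappa_\alpha\circ\kappa_F$ coming from the Weyl symbol $a=Ce^{i(F+\alpha)}$ (via the equality $\tilde\kappa=\kappa$ in Proposition~\ref{prop:kappa = tilde kappa} and uniqueness of the affine factorization) yields $\Im\alpha(\rho)=0$ whenever $\rho\in\Lambda_{\Phi_0}$ and $\Im F(\rho)=0$. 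Together with $\Im F\geq 0$ on $\Lambda_{\Phi_0}$, this gives $a\in L^\infty(\Lambda_{\Phi_0})$. The content you would need to add to make your approach work --- a characterization of when a complex phase-space translation quantizes to a bounded operator on $H_{\Phi_0}$, applied to the specific translation generated by $q_1$ --- is essentially equivalent to the paper's Proposition~\ref{prop:m_l vanishes on intersection}, so the reduction does not actually save work.

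One smaller point: your Step~1 Gaussian computation of $a=Ce^{P}$ is correct in spirit (the paper obtains the same thing in \eqref{eqn:a(x,xi) by vc}--\eqref{eqn:a(x,xi) by F and alpha}), but note that it is \eqref{assumption 1} via \cite[Proposition 2.1]{coburn2023characterizing}, rather than \eqref{assumption 2}, that guarantees nondegeneracy of the relevant Gaussian; \eqref{assumption 2} is used in Proposition~\ref{prop:f_xz'' invertible} to ensure $\det f_{xz}''\neq 0$, i.e.\ that $\kappa$ is a genuine canonical \emph{transformation}.
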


\noindent
\emph{Remark.} The sufficiency of the boundedness of the Weyl symbol of $\Top(e^q)$ for the boundedness of the Toeplitz operator has been established in \cite{coburn2021weyl}, and here we only need to check the necessity.

\noindent
The compactness of Toeplitz operators $\Top(e^q)$ where $q$ is an inhomogeneous quadratic polynomial can also be characterized by the vanishing of their Weyl symbols at infinity, which agrees with a general conjecture stated in \cite{bauer2010heatflow}.

\begin{thm}
\label{thm:2}
Let $\Phi_0$ be a strictly plurisubharmonic quadratic form on $\CC^n$ and let $q$ be an inhomogeneous complex valued quadratic polynomial on $\CC^n$ whose principal part $q_2$ satisfies \eqref{assumption 1}, \eqref{assumption 2}. Then the Toeplitz operator $\Top(e^q):H_{\Phi_0}(\CC^n)\to H_{\Phi_0}(\CC^n)$ is compact if and only if the corresponding Weyl symbol $a\in\CI(\Lambda_{\Phi_0})$ given by \eqref{eqn:Weyl symbol} vanishes at infinity.
\end{thm}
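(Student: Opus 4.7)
Both directions of the equivalence hinge on the same structural fact: the Weyl symbol $a$ given by the heat-transform formula \eqref{eqn:Weyl symbol} is itself the exponential of an inhomogeneous complex quadratic polynomial on $\Lambda_{\Phi_0}$, i.e.\ $a = C \exp(\tilde{q})$ for some inhomogeneous quadratic $\tilde{q}$ with principal part $\tilde{q}_2$. The elementary observation I shall use repeatedly is that such a Gaussian vanishes at infinity on $\Lambda_{\Phi_0} \simeq \RR^{2n}$ if and only if $\Re \tilde{q}_2$ is strictly negative definite: if $\Re \tilde{q}_2$ had a nontrivial real kernel $K$, then $\Re \tilde{q}|_K$ would reduce to an affine function, which cannot tend to $-\infty$ uniformly as its argument escapes to infinity in $K$.

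For the sufficiency direction, assume $a \in C_0(\Lambda_{\Phi_0})$. By the above, $\Re \tilde{q}_2$ is strictly negative definite, so $a$ has Gaussian decay and in particular belongs to $L^2(\Lambda_{\Phi_0})$ with respect to the symplectic volume. The standard identification of Weyl operators with square-integrable symbols as Hilbert--Schmidt operators, transported to $H_{\Phi_0}$ via the Bargmann transform, then shows that $\Top(e^q)$ is Hilbert--Schmidt, and in particular compact.

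For the necessity direction, suppose $\Top(e^q)$ is compact. By Theorem~\ref{thm:1} the Weyl symbol satisfies $a \in L^\infty(\Lambda_{\Phi_0})$. Assume, for contradiction, that $a$ does not vanish at infinity; by the structural observation, $\Re \tilde{q}_2$ has a nontrivial real kernel $K$, and one can select a sequence $\rho_k \in \Lambda_{\Phi_0}$ with $\rho_k \to \infty$ along $K$ and $|a(\rho_k)| \geq \delta > 0$. Let $k_{\rho_k} \in H_{\Phi_0}$ denote the normalized reproducing kernels at $\rho_k$; these form a weakly null sequence of unit vectors, and hence by compactness $\|\Top(e^q) k_{\rho_k}\|_{H_{\Phi_0}} \to 0$, so the diagonal matrix elements $\tilde{a}(\rho_k) := \langle \Top(e^q) k_{\rho_k}, k_{\rho_k}\rangle$ tend to zero. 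However, $\tilde{a}$ is the Berezin transform of $a$, itself a further Gaussian heat convolution of $a$, and therefore of the form $C' \exp(\tilde{q}^*)$ with $(\tilde{q}^*)_2$ related to $\tilde{q}_2$ by an explicit M\"{o}bius-type transformation of matrices; the non-degeneracy hypothesis \eqref{assumption 2} ensures that this transformation preserves the zero eigenspace, so $\tilde{a}$ likewise fails to decay along $K$, contradicting $\tilde{a}(\rho_k) \to 0$.

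The main technical obstacle is the last step: computing the Berezin transform explicitly and verifying that the induced transformation on the quadratic parts preserves the kernel. Everything else relies on standard facts about Bargmann spaces and Weyl quantization together with Theorem~\ref{thm:1}.
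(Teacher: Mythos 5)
Your sufficiency argument is correct and actually more elementary than the paper's. The paper deduces compactness by showing that the strict positivity of $\kappa_F$ forces $\tilde\kappa(\Lambda_{\Phi_0}) = \Lambda_{\Phi_2}$ with $\Phi_0 - \Phi_2 \gtrsim |x|^2$, so that $\Top(e^q)$ factors through the compactly embedded space $H_{\Phi_2}(\CC^n)$; you instead observe that a strictly negative-definite real quadratic part of the exponent gives Gaussian decay, hence $a \in L^2(\Lambda_{\Phi_0})$, hence $a^{\mathrm w}$ is Hilbert--Schmidt. That shortcut is valid.

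The necessity direction, however, has a genuine gap at exactly the step you flag. The assertion that the Berezin transform (a forward Gaussian heat convolution of $a$) ``likewise fails to decay along $K$'' is false in general. Write $a = Ce^{i(F+\alpha)}$ as in \eqref{eqn:a(x,xi) by F and alpha}; boundedness gives $\Im F|_{\Lambda_{\Phi_0}} \geq 0$ and $\Im\alpha = 0$ on the real kernel $K$ of $\Im F|_{\Lambda_{\Phi_0}}$. But on $K$ the exponent can still carry a nonzero \emph{oscillatory} quadratic contribution from $\Re F$. The forward heat flow of a purely oscillatory Gaussian is a decaying one: in one variable,
\[
e^{s\partial_x^2}\, e^{i\gamma x^2} = \const\cdot \exp\!\Big(\tfrac{i\gamma}{1 - 4is\gamma}\,x^2\Big),
\qquad
\Re\Big(\tfrac{i\gamma}{1-4is\gamma}\Big) = \tfrac{-4s\gamma^2}{1+16s^2\gamma^2} < 0 \quad (\gamma\neq 0,\ s>0).
\]
So the zero eigenspace of the real part of the quadratic exponent is \emph{not} preserved by the M\"obius-type transformation that the heat flow induces on Hessians, and assumption \eqref{assumption 2} (which constrains the Toeplitz symbol, not the oscillatory part of $a$ along $K$) does not rescue this. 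Concretely: you can only conclude $\tilde a(\rho_k)\to 0$, which is perfectly compatible with $|a(\rho_k)|\geq\delta$; the desired contradiction never materializes.

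The paper avoids this by working with $\|\Top(e^q)k_w\|_{H_{\Phi_0}}$ rather than the diagonal matrix element $\langle\Top(e^q)k_w,k_w\rangle$. Via the exact stationary-phase formula \eqref{eqn:Top(eQ)kw norm} the norm is a \emph{supremum} of $4\Re f(x,\wbar)-2\Phi_0(x)$ over $x$, not a heat average; the supremum is insensitive to oscillatory phase and therefore stays bounded below along the degenerate direction $w = \lambda w_0$, yielding the contradiction with $\|\Top(e^q)k_w\|\to 0$. If you want to keep the Berezin-transform viewpoint, you would need to pass to an off-diagonal quantity such as $\sup_{z}|\langle \Top(e^q)k_w, k_z\rangle|$, which recovers the norm; the on-diagonal values alone are too weak.
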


The paper is organized as follows. In Section \ref{sec:Toeplitz}, we follow \cite{coburn2023characterizing} and study the action of bounded Toeplitz operator on a family of normalized coherent states on the Bargmann space. In Section \ref{sec:canonical trans}, we explore properties of the complex affine canonical transformation associated to the Toeplitz operator, using the information we obtained in the previous section. In Section \ref{sec:Weyl symbols}, we prove Theorem \ref{thm:1} by considering the canonical transformation defined using the Weyl symbol of the Toeplitz operator which is identical to the canonical transformation studied in the previous section. Theorem \ref{thm:2} is established in Section \ref{sec:compactness}. In Section \ref{sec:an example} we explore an explicit family of metaplectic Toeplitz operators on a quadratic Bargmann space, illustrating our theorems.

\medskip

\noindent
{\sc Acknowledgments.} The author would like to thank Michael Hitrik for suggesting this project and for many helpful discussions. The author would also like to thank Lewis Coburn for suggesting adding an explicit example -- see Section \ref{sec:an example}.

\section{Bounded Toeplitz operators}
\label{sec:Toeplitz}

Let $\Phi_0$ be a strictly plurisubharmonic quadratic form on $\CC^n$ and let $q$ be an inhomogeneous complex valued quadratic polynomial on $\CC^n$ whose principal part $q_2$ satisfies \eqref{assumption 1}, \eqref{assumption 2}. Following an argument in \cite[Section 4]{coburn2019positivity}, we see that when equipped with the maximal domain
\begin{equation}
\label{eqn:Top domain}
    \D(\Top(e^q)) = \{ u\in H_{\Phi_0}(\CC^n) ; e^q u\in L^2(\CC^n, e^{-2\Phi_0}L(dx)) \},
\end{equation}
the Toeplitz operator
\begin{equation}
\label{eqn:Toeplitz}
    \Top(e^q) = \Pi_{\Phi_0}\circ e^q \circ \Pi_{\Phi_0} : H_{\Phi_0}(\CC^n) \to H_{\Phi_0}(\CC^n).
\end{equation}
is densely defined.

\noindent
Let $a\in\CI(\Lambda_{\Phi_0})$ be the Weyl symbol of $\Top(e^q)$, given in \eqref{eqn:Weyl symbol} and let us recall that the implication $a\in L^\infty (\Lambda_{\Phi_0}) \implies \Top(e^q) \in \mathcal{L}(H_{\Phi_0}(\CC^n),H_{\Phi_0}(\CC^n))$ has been proved in \cite{coburn2021weyl}. Therefore, we shall only be concerned with the implication: $\Top(e^q)\in\mathcal{L}(H_{\Phi_0}(\CC^n),H_{\Phi_0}(\CC^n))\implies a\in L^\infty (\Lambda_{\Phi_0})$. For that, following \cite{coburn2021weyl}, let us recall the integral representation for $\Pi_{\Phi_0}$ and write for $u\in\D(\Top(e^q))$,
\begin{equation}
\label{eqn:Top integral rep.}
    \Top(e^q)u(x) = C\iint_\Gamma e^{2(\Psi_0(x,\theta)-\Psi_0(y,\theta))+Q(y,\theta)} u(y) dy d\theta,\quad C\neq 0,
\end{equation}
where $\Gamma$ is the contour in $\CC^{2n}$, given by $\theta=\ybar$, and $\Psi_0$ and $Q$ are the polarizations of $\Phi_0$ and $q$ respectively, i.e. a holomorphic quadratic form and a holomorphic quadratic polynomial on $\CC_{y,\theta}^{2n}$ such that $\Psi_0|_{\Gamma} = \Phi_0$, $Q|_{\Gamma} = q$.

\noindent
We now assume that
\begin{equation}
\label{Toeplitz is bounded}
    \Top(e^q)\in \mathcal{L}(H_{\Phi_0}(\CC^n), H_{\Phi_0}(\CC^n)).
\end{equation}
Following \cite{coburn2023characterizing}, we shall check first that it suffices to show that $a\in L^\infty (\Lambda_{\Phi_0})$, when the pluriharmonic part of $\Phi_0$ vanishes. For that we decompose
\[
    \Phi_0 = \Phi_{\text{herm}} + \Re f,\quad f(x) = (\Phi_0)_{xx}'' x\cdot x,
\]
and introduce the unitary metaplectic Fourier integral operator
\[
    \UU: H_{\Phi_0}(\CC^n) \owns u \mapsto e^{-f} u \in H_{\Phi_{\text{herm}}}(\CC^n).
\]
Let $\Pi_{\Phi_{\text{herm}}}$ be the orthogonal projection defined analogously to \eqref{eqn:orthoProj}, then we have
\[
    \Pi_{\Phi_{\text{herm}}} \circ e^q \circ \Pi_{\Phi_{\text{herm}}} = \UU\circ \Top(e^q) \circ \UU^{-1},
\]
and thus we have the following equivalence:
\[
    \eqref{Toeplitz is bounded} \iff \Pi_{\Phi_{\text{herm}}} \circ e^q \circ \Pi_{\Phi_{\text{herm}}} \in \mathcal{L}(H_{\Phi_{\text{herm}}}(\CC^n), H_{\Phi_{\text{herm}}}(\CC^n)).
\]
From the point of view of Weyl quantization, a direct computation shows that the exact Egorov theorem holds,
\[
    \UU \circ a^\text{w} (x,D_x) \circ \UU^{-1} = ((\kappa_A^{-1})^* a)^{\text{w}} (x,D_x),
\]
where $\kappa_A: \CC^{2n} \owns (y,\eta) \mapsto (y,\eta - Ay)\in \CC^{2n}$, $A = \frac{2}{i}(\Phi_0)_{xx}''$, and we have $\kappa_A(\Lambda_{\Phi_0}) = \Lambda_{\Phi_{\text{herm}}}$. We conclude the equivalence regarding the Weyl symbols:
\[
    a \in L^\infty (\Lambda_{\Phi_0}) \iff (\kappa_A^{-1})^* a \in L^\infty (\Lambda_{\Phi_{\text{herm}}}).
\]
In what follows, we shall assume thereby that $\Phi_0$ and $\Psi_0$ are given by
\begin{equation}
\label{eqn:Phi0}
    \Phi_0(x) = (\Phi_0)_{\xbar x}'' x\cdot\xbar,\quad x\in\CC^n,
\end{equation}
and
\begin{equation}
\label{eqn:Psi0}
    \Psi_0(x,y) = (\Phi_0)_{\xbar x}'' x\cdot y,\quad x, y \in\CC^n.
\end{equation}
It follows that
\begin{equation}
\label{eqn:decay diagonal}
    2\Re \Psi_0(x,\ybar) - \Phi_0(x) - \Phi_0(y) = - (\Phi_0)_{\xbar x}''(x-y)\cdot (\overline{x-y}) \asymp -|x-y|^2.
\end{equation}

\noindent
Assuming \eqref{Toeplitz is bounded}, we shall follow \cite{coburn2023characterizing} to consider the action of $\Top(e^q)$ on the space of ``coherent states", that is, the normalized reproducing kernels for the Bargmann space $H_{\Phi_0}(\CC^n)$. Let us introduce,
\begin{equation}
\label{eqn:kw(x)}
    k_w(x) := C_{\Phi_0} e^{2\Psi_0(x,\wbar)-\Phi_0(w)},\quad w\in\CC^n,
\end{equation}
where the constant $C_{\Phi_0}>0$ is chosen such that $k_w$ is normalized in $H_{\Phi_0}(\CC^n)$, i.e.
\begin{equation}
\label{eqn:kw normalized}
    \|k_w(\cdot)\|_{H_{\Phi_0}(\CC^n)} = 1,\quad w\in\CC^n.
\end{equation}
The properties \eqref{eqn:decay diagonal} and \eqref{assumption 1} guarantee that
\begin{equation}
\label{eqn:kw in domain}
    k_w \in \D(\Top(e^q)),\quad w\in\CC^n.
\end{equation}
Using \eqref{eqn:Top integral rep.} we have
\begin{equation}
\label{eqn:Top(eQ)kw integral}
    (\Top(e^q)k_w)(x) = C C_{\Phi_0} e^{-\Phi_0(w)} \iint_\Gamma e^{Q(y,\theta)-2\Psi_0(y,\theta)+2\Psi_0(x,\theta)+2\Psi_0(y,\wbar)} dy d\theta.
\end{equation}
Let us denote by $Q_2$ the polarization of the homogeneous part $q_2$ of $q$. It follows from \eqref{assumption 1} and \cite[Proposition 2.1]{coburn2023characterizing} that the holomorphic quadratic form
\begin{equation}
\label{eqn:quadratic form Q-2Psi}
    \CC_{y,\theta}^{2n} \owns (y,\theta) \mapsto Q_2(y,\theta) - 2\Psi_0(y,\theta)\ \,\text{is non-degenerate.}
\end{equation}
Then the method of quadratic stationary phase \cite[Lemma 13.2]{zworski2012semiclassical} shows that, with a new constant $C\neq 0$,
\begin{equation}
\label{eqn:Top(eQ)kw exact}
    (\Top(e^q)k_w)(x) = C e^{2f(x,\wbar)-\Phi_0(w)},
\end{equation}
where $f(x,z)$ is a holomorphic quadratic polynomial on $\CC_{x,z}^{2n}$ given by
\begin{equation}
\label{eqn:f(x,z) by vc}
    2f(x,z) = \vc_{y,\theta} ( Q(y,\theta)-2\Psi_0(y,\theta) + 2\Psi_0(x,\theta) + 2\Psi_0(y,z) ).
\end{equation}
Here ``vc" stands for the critical value. We shall also write $f(x,z)$ explicitly
\begin{equation}
\label{eqn:f(x,z) explicit}
    f(x,z) = \frac{1}{2}f_{xx}'' x\cdot x + f_{xz}''z\cdot x + \frac{1}{2} f_{zz}'' z\cdot z + l_x \cdot x + l_z \cdot z + f_0,
\end{equation}
and denote its quadratic part and linear part respectively by
\begin{equation}
\label{eqn:f_q, f_l}
    f_q(x,z):= \frac{1}{2}f_{xx}'' x\cdot x + f_{xz}''z\cdot x + \frac{1}{2} f_{zz}'' z\cdot z,\quad f_l(x,z):= l_x \cdot x + l_z \cdot z.
\end{equation}

\begin{prop}
\label{prop:conditions on f(x,z)}
Assume that \eqref{Toeplitz is bounded} holds, and let $f(x,z)$ be defined by \eqref{eqn:f(x,z) by vc}. Then the quadratic part $f_q$ and the linear part $f_l$ of $f$ satisfy
\begin{equation}
\label{eqn:f_q inequality}
    2\Re f_q(x,\wbar)\leq \Phi_0(x) + \Phi_0(w),\quad\forall (x,w)\in\CC_x^n\times\CC_w^n,
\end{equation}
and
\begin{equation}
\label{eqn:f_l condition}
    2\Re f_q(x,\wbar) = \Phi_0(x) + \Phi_0(w) \implies \Re f_l(x,\wbar) = 0.
\end{equation}
\end{prop}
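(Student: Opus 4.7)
The plan is to apply the standard pointwise bound in the Bargmann space. Namely, any $u\in H_{\Phi_0}(\CC^n)$ satisfies $|u(x)| \le C\, e^{\Phi_0(x)}\,\|u\|_{H_{\Phi_0}}$, which is immediate from the reproducing formula $u(x) = \langle u, K_x\rangle$ together with $\|K_x\|^2 = K_x(x) = O(e^{2\Phi_0(x)})$. Applying this to $u = \Top(e^q)k_w$ and using $\|k_w\|=1$ together with the boundedness assumption \eqref{Toeplitz is bounded}, we obtain
\[
|(\Top(e^q)k_w)(x)| \le C'\,e^{\Phi_0(x)},\qquad \forall\,(x,w)\in\CC^n\times\CC^n,
\]
with $C'$ independent of $w$. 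Substituting the explicit formula \eqref{eqn:Top(eQ)kw exact} and passing to logarithms yields the uniform bound
\[
G(x,w) := 2\Re f(x,\wbar) - \Phi_0(x) - \Phi_0(w) \le M,\qquad \forall\,(x,w)\in\CC^n\times\CC^n,
\]
for some constant $M$.

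The rest is a real homogeneity argument. Under the reduction \eqref{eqn:Phi0}, $\Phi_0$ is purely homogeneous of degree $2$ in the real variable $x\in\RR^{2n}\simeq\CC^n$, and the decomposition $f = f_q + f_l + f_0$ has $f_q$, $f_l$, $f_0$ homogeneous of degrees $2$, $1$, $0$ respectively. Thus $G$ splits as $G = G_q + G_l + G_0$, where $G_q(x,w) = 2\Re f_q(x,\wbar) - \Phi_0(x) - \Phi_0(w)$, $G_l(x,w) = 2\Re f_l(x,\wbar)$ (since $\Phi_0$ contributes no linear part), and $G_0 = 2\Re f_0$. Under the real dilation $(x,w)\mapsto(tx,tw)$ with $t\in\RR$, the uniform bound becomes
\[
t^2 G_q(x,w) + t\,G_l(x,w) + G_0 \le M,\qquad \forall\,t\in\RR,
\]
for every fixed pair $(x,w)\in\CC^n\times\CC^n$.

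Letting $t\to+\infty$ forces $G_q(x,w)\le 0$, which is precisely \eqref{eqn:f_q inequality}. If equality holds at a point $(x,w)$, i.e.\ $G_q(x,w)=0$, then the left-hand side becomes an affine function of $t\in\RR$, and boundedness above for all real $t$ forces the linear coefficient to vanish, i.e.\ $G_l(x,w)=0$, which is exactly $\Re f_l(x,\wbar)=0$ and establishes \eqref{eqn:f_l condition}.

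The only nontrivial ingredient in this plan is the pointwise bound $|u(x)|\lesssim e^{\Phi_0(x)}\|u\|$ in the Bargmann space with the purely Hermitian quadratic weight \eqref{eqn:Phi0}; this is, however, a standard consequence of the reproducing property and the explicit form $K(x,y)=C_0\,e^{2\Psi_0(x,\ybar)}$ of the reproducing kernel. Once it is in hand, the conclusion reduces to an elementary scaling argument applied to a real quadratic polynomial that is bounded above on all of $\CC^{2n}$.
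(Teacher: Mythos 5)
Your argument is correct and reaches the same key uniform bound
\[
2\Re f(x,\wbar) - \Phi_0(x) - \Phi_0(w) \le M, \qquad \forall (x,w)\in\CC^n\times\CC^n,
\]
as the paper, but by a genuinely different route. The paper computes $\|\Top(e^q)k_w\|_{H_{\Phi_0}}^2$ as a Gaussian integral, first establishing $2\Re f_q(x,0)<\Phi_0(x)$ for $x\ne 0$ to ensure convergence, and then evaluates the integral exactly by quadratic real stationary phase, obtaining $\|\Top(e^q)k_w\|^2 = C^2 e^{-2\Phi_0(w)}e^{\sup_x(4\Re f(x,\wbar)-2\Phi_0(x))}$; the uniform boundedness of the operator then gives the bound above. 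You instead apply the standard pointwise reproducing-kernel estimate $|u(x)|\lesssim e^{\Phi_0(x)}\|u\|_{H_{\Phi_0}}$ directly to $u=\Top(e^q)k_w$, using only $\|K_x\|^2 = K_x(x)\asymp e^{2\Phi_0(x)}$, and read off the bound from the explicit formula \eqref{eqn:Top(eQ)kw exact}. This is more elementary: it avoids the stationary-phase evaluation and the preliminary step \eqref{eqn:2Ref(x,0)} entirely. The concluding step is also the same: you make explicit the real homogeneity/scaling argument that the paper compresses into the final sentence ``This is equivalent to \eqref{eqn:f_q inequality} and \eqref{eqn:f_l condition}'' --- namely, a real quadratic polynomial $t^2 G_q + tG_l + G_0$ bounded above on $\RR$ forces $G_q\le 0$, and $G_q=0$ additionally forces $G_l=0$. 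Both proofs are correct; yours trades the exact computation of $\|\Top(e^q)k_w\|$ for a one-sided pointwise bound, which is all that is needed here.
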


\begin{proof}
In view of \eqref{eqn:kw in domain} and \eqref{eqn:Top(eQ)kw exact} we have
\begin{equation}
\label{eqn:e^f in Bargmann space}
    e^{2f(\cdot,\wbar)} \in H_{\Phi_0}(\CC^n),\quad \forall w\in\CC^n.
\end{equation}
In particular, \eqref{eqn:e^f in Bargmann space} implies that
\begin{equation}
\label{eqn:2Ref(x,0)}
    2\Re f_q(x,0) - \Phi_0(x) < 0,\quad 0\neq x\in\CC^n.
\end{equation}
By \eqref{eqn:Top(eQ)kw exact} we can write
\[
    \|\Top(e^q)k_w\|_{H_{\Phi_0}(\CC^n)}^2 = C^2 e^{-2\Phi_0(w)} \int_{\CC^n} e^{4\Re f(x,\wbar)- 2\Phi_0(x)} L(dx),
\]
applying the method of quadratic stationary (real) phase in view of \eqref{eqn:2Ref(x,0)}, we conclude that, with a new constant $C\neq 0$,
\begin{equation}
\label{eqn:Top(eQ)kw norm}
     \|\Top(e^q)k_w\|_{H_{\Phi_0}(\CC^n)}^2 = C^2 e^{-2\Phi_0(w)} e^{\sup_x(4\Re f(x,\wbar)-2\Phi_0(x))}.
\end{equation}
We note that $\|\Top(e^q)k_w\|_{H_{\Phi_0}(\CC^n)}$ is bounded uniformly in $w\in\CC^n$ as a result of \eqref{Toeplitz is bounded} and \eqref{eqn:kw normalized}. Thus there exists $C>0$ such that for all $w\in\CC^n$,
\[
    \sup_x(4\Re f(x,\wbar)-2\Phi_0(x)) - 2\Phi_0(w)\leq C,
\]
in other words,
\[
2\Re f(x,\wbar)-\Phi_0(x)-\Phi_0(w) \leq C,\quad \forall (x,w)\in\CC_x^n\times\CC_w^n.
\]
This is equivalent to \eqref{eqn:f_q inequality} and \eqref{eqn:f_l condition}.
\end{proof}

\noindent
The significance of the holomorphic quadratic polynomial $f(x,z)$ is that it gives an alternative way to represent the Fourier integral operator $\Top(e^q)$ in \eqref{eqn:Top integral rep.}. To see that let us first recall from \cite{hitrik2018minicourse}, \cite{sjostrand1996functionspaces} the integral representation for the orthogonal projection $\Pi_{\Phi_0}$ in \eqref{eqn:orthoProj}:
\begin{equation}
\label{eqn:Proj integral repres}
    \Pi_{\Phi_0} u(x) = a_0\iint e^{2\Psi_0(x,\ybar)-2\Phi_0(y)} u(y) dy d\ybar,\quad a_0\neq 0,
\end{equation}
then recalling \eqref{eqn:kw(x)} we shall apply $\Top(e^q)$ to
\[
    \D(\Top(e^q))\owns u(x) = \Pi_{\Phi_0} u(x) = \widetilde{a_0} \iint k_y(x) u(y)e^{-\Phi_0(y)} dy d\ybar
\]
we obtain by \eqref{eqn:Top(eQ)kw exact}, with a new constant $C\neq 0$,
\begin{equation}
\label{eqn:Top alt form}
\begin{split}
    \Top(e^q)u(x) &= \widetilde{a_0} \iint (\Top(e^q)k_y)(x) u(y)e^{-\Phi_0(y)} dy d\ybar \\
    &= C\iint e^{2f(x,\ybar)-2\Phi_0(y)}u(y) dy d\ybar = C\iint_\Gamma e^{2( f(x,z) - \Psi_0(y,z) )} u(y) dy dz.
\end{split}
\end{equation}
Here $\Gamma = \{ (y,z)\in\CC^{2n}; z=\ybar \}$. Therefore, the integral representations \eqref{eqn:Top integral rep.} and \eqref{eqn:Top alt form} should generate the same canonical transformation $\kappa$, see also \cite[Section 2]{coburn2023characterizing} for a direct calculation. In the next section we shall introduce a canonical transformation $\kappa$ associated with the phase function
\begin{equation}
\label{eqn:phi(x,y,z)}
    \phi(x,y,z) := \frac{2}{i}(f(x,z)-\Psi_0(y,z))
\end{equation}
in the representation \eqref{eqn:Top alt form} of the Fourier integral operator $\Top(e^q)$, and explore properties of $\kappa$ following from the conditions  \eqref{eqn:f_q inequality} and \eqref{eqn:f_l condition} satisfied by $f(x,z)$.

\section{Canonical transformation of the Toeplitz operator}
\label{sec:canonical trans}

In order to define a canonical transformation associated to \eqref{eqn:Top alt form}, we shall first prove a result similar to \cite[Proposition 2.2]{coburn2023characterizing} in our setting (inhomogeneous $f(x,z)$).
\begin{prop}
\label{prop:f_xz'' invertible}
The quadratic polynomial $f(x,z)$ defined in \eqref{eqn:f(x,z) by vc} satisfies
\begin{equation}
\label{eqn:det f_xz'' not zero}
    \det f_{xz}'' \neq 0.
\end{equation}
\end{prop}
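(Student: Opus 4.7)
The plan is to verify $\det f_{xz}'' \neq 0$ by a direct envelope-theorem computation of the mixed Hessian $(2f)_{(x,z)(x,z)}''$, reducing the statement to the non-singularity of a certain block of $M^{-1}$ (where $M$ is the Hessian of $Q_2 - 2\Psi_0$), and then establishing that non-singularity by a block-determinant identity that pivots on the invertible off-diagonal block of $M$. In spirit this mirrors \cite[Proposition 2.2]{coburn2023characterizing}; note that the linear and constant parts of $q$ will play no role, since $f_{xz}''$ depends only on the quadratic part of $f$, which in turn depends only on $q_2$.

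First, I would set up the calculation by writing $F(y,\theta;x,z) := Q(y,\theta) - 2\Psi_0(y,\theta) + 2\Psi_0(x,\theta) + 2\Psi_0(y,z)$, so that $2f(x,z) = \vc_{y,\theta} F$. Letting $u = (y,\theta)$ and $v = (x,z)$, the Hessian $F_{uu}'' = M$ coincides with the Hessian of $Q_2 - 2\Psi_0$ and is non-singular by \eqref{eqn:quadratic form Q-2Psi}. In block form
\[
M = \begin{pmatrix} M_{11} & M_{12} \\ M_{12}^T & M_{22} \end{pmatrix},
\]
with diagonal blocks $M_{11} = (Q_2)_{yy}''$ and $M_{22} = (Q_2)_{\theta\theta}''$, which need not be invertible, and with cross-block $M_{12} = (Q_2 - 2\Psi_0)_{y\theta}''$. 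Under the polarization identification, $M_{12}$ corresponds to $\partial_x \partial_{\xbar}(q_2 - 2\Phi_0)$ (possibly up to transpose), and is therefore invertible by \eqref{assumption 2}.

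Next, I would apply the envelope formula for critical values of a quadratic family to obtain
\[
(2f)_{vv}'' = F_{vv}'' - F_{vu}''\, M^{-1}\, F_{uv}''.
\]
The term $F_{vv}''$ vanishes because $F$ contains no direct $x$-$z$ coupling: both $\Psi_0(x,\theta)$ and $\Psi_0(y,z)$ are linear in $(x,z)$ and each involves only one of $x,z$. A direct computation of the mixed blocks of $F_{uv}''$ will then yield
\[
f_{xz}'' = -2\,\Phi_{0,\xbar x}''\, R\, \Phi_{0,\xbar x}'',
\]
where $R$ is the lower-left $n\times n$ block of $M^{-1}$. Since $\Phi_{0,\xbar x}''$ is invertible by strict plurisubharmonicity of $\Phi_0$, the desired conclusion $\det f_{xz}'' \neq 0$ reduces to $\det R \neq 0$.

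The hard part will be proving $\det R \neq 0$, because the standard Schur-complement formulas are unavailable here: we cannot use $M_{11}$ or $M_{22}$ as a pivot since we have no control over their invertibility. The idea is instead to pivot on the off-diagonal block $M_{12}$, which is invertible by \eqref{assumption 2}. Reading off the first block-column of $MM^{-1} = I$ and eliminating the top-left block of $M^{-1}$ gives $R = \bigl[M_{12} - M_{11}(M_{12}^T)^{-1} M_{22}\bigr]^{-1}$, and a parallel block factorization of $M$ (permute the two block rows and multiply by an appropriate unit block-triangular matrix) produces the identity
\[
\det R = (-1)^n \frac{\det M_{12}}{\det M},
\]
whose numerator and denominator are non-zero by \eqref{assumption 2} and \eqref{eqn:quadratic form Q-2Psi}, respectively. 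This will complete the proof, and will also clarify precisely where the additional hypothesis \eqref{assumption 2} enters in the inhomogeneous setting compared to \cite{coburn2023characterizing}.
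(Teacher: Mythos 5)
Your argument is correct and is essentially the same as the paper's: both reduce the claim to the invertibility of an off-diagonal $n\times n$ block of the inverse of the (nonsingular) coefficient matrix governing the critical point equations, and both pivot on the cross-block made invertible by assumption \eqref{assumption 2} --- the paper phrases this as the Schur complement $B_{22}$ of the system matrix $\mathcal{A}$ (which is, up to a block permutation and a sign, your Hessian $M$), while you read it off from the first block-column of $MM^{-1}=I$. One remark: once you have $\bigl[M_{12}-M_{11}(M_{12}^T)^{-1}M_{22}\bigr]N_{21}=I$, the matrix $N_{21}=R$ is a square matrix with a left inverse, hence already invertible, so the closing determinant identity is not needed.
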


\begin{proof}
Using \eqref{eqn:Psi0} and \eqref{eqn:f(x,z) by vc} we have
\begin{equation}
\label{eqn:f(x,z) by vc new}
    2f(x,z) = \vc_{y,\theta}(Q(y,\theta) - 2(\Phi_0)_{\xbar x}''y\cdot\theta + 2(\Phi_0)_{\xbar x}''x\cdot\theta + 2(\Phi_0)_{\xbar x}'' y\cdot z ),
\end{equation}
and denote by $(y,\theta) = ((y(x,z),\theta(x,z))$ the unique (thanks to \eqref{eqn:quadratic form Q-2Psi}) critical point achieving the critical value \eqref{eqn:f(x,z) by vc new}. Then
\begin{equation}
\label{eqn:f_x' and f_xz''}
    f_x'(x,z) = (\Phi_0)_{x\xbar}'' \,\theta(x,z),\quad f_{xz}'' = (\Phi_0)_{x\xbar}'' \,\theta_z'(x,z).
\end{equation}
It remains to show that $\theta_z'(x,z)$ is invertible. We note that the critical point $(y,\theta) =(y(x,z),\theta(x,z))$ satisfies
\begin{equation}
\label{eqn:critical point f(x,z)}
2(\Phi_0)_{\xbar x}'' y - Q_\theta'(y,\theta) = 2 (\Phi_0)_{\xbar x}'' x,\quad 2(\Phi_0)_{x \xbar}'' \theta - Q_y'(y,\theta) = 2 (\Phi_0)_{x\xbar}'' z.
\end{equation}
Let us write
\begin{equation}
\label{eqn:Q explicit}
    Q(y,\theta) = \frac{1}{2}Q_{yy}'' y\cdot y + Q_{y\theta}'' \theta\cdot y + \frac{1}{2}Q_{\theta\theta}'' \theta\cdot\theta + a\cdot y + b\cdot\theta,\quad a, b \in \CC^n
\end{equation}
Then the equations \eqref{eqn:critical point f(x,z)} can be rewritten as follows:
\[
    \begin{pmatrix}
    A_{11} & A_{12} \\ A_{21} & A_{22}
    \end{pmatrix}
    \begin{pmatrix}
    y \\ \theta
    \end{pmatrix}
    =
    \begin{pmatrix}
    2(\Phi_0)_{\xbar x}''x + b \\ 2(\Phi_0)_{x\xbar}''z + a
    \end{pmatrix}
\]
where the $2n \times 2n$ matrix
\[
    \mathcal{A} = \begin{pmatrix}
    A_{11} & A_{12} \\ A_{21} & A_{22}
    \end{pmatrix} = \begin{pmatrix}
    2(\Phi_0)_{\xbar x}'' - Q_{\theta y}'' & -Q_{\theta\theta}'' \\ -Q_{yy}'' & 2(\Phi_0)_{x\xbar}'' - Q_{y\theta}''
    \end{pmatrix}
\]
is invertible by \eqref{eqn:quadratic form Q-2Psi}. Let us set
\[
    \mathcal{B} = \mathcal{A}^{-1} = \begin{pmatrix}
    B_{11} & B_{12} \\ B_{21} & B_{22}
    \end{pmatrix}.
\]
The $n\times n$ block $A_{11}$ is invertible thanks to \eqref{assumption 2}, thus $B_{22}$ is invertible by the Schur's complement formula, see for instance \cite[Lemma 3.1]{sjostrand2007elementary}. It then follows that $\theta_z'(x,z) = 2 B_{22} (\Phi_0)_{x\xbar}''$ is invertible, and we conclude using  \eqref{eqn:f_x' and f_xz''} that \eqref{eqn:det f_xz'' not zero} holds.
\end{proof}

\noindent
Let us now introduce a canonical relation associated to the phase function $\phi(x,y,z)$ in \eqref{eqn:phi(x,y,z)}, with $z\in\CC^n$ viewed as the fiber variables,
\begin{equation}
\label{eqn:kappa defn}
    \kappa : \CC^{2n} \owns (y,-\phi_y'(x,y,z)) \mapsto (x,\phi_x'(x,y,z)) \in \CC^{2n},\quad \phi_z'(x,y,z)=0.
\end{equation}
More precisely, $\kappa$ is given by
\begin{equation}
\label{eqn:kappa using f(x,z)}
    \kappa : \left(  y,\frac{2}{i}(\Phi_0)_{x\xbar}'' z \right) \mapsto \left( x,\frac{2}{i}f_x'(x,z) \right),\quad f_z'(x,z) = (\Phi_0)_{\xbar x}'' y.
\end{equation}
It is natural to introduce another canonical relation $\kappa_q$ which is associated to the homogeneous part of $\phi(x,y,z)$, i.e. $\phi_q (x,y,z) := \frac{2}{i}(f_q(x,z) - \Psi_0(y,z))$,
\begin{equation}
\label{eqn:kappa_q}
\begin{gathered}
    \kappa_q : (y,-(\phi_q)_y'(x,y,z)) \mapsto (x,(\phi_q)_x'(x,y,z)),\quad (\phi_q)_z'(x,y,z)=0, \\
    \text{or}\ \kappa_q : \left(  y,\frac{2}{i}(\Phi_0)_{x\xbar}'' z \right) \mapsto \left( x,\frac{2}{i}(f_q)_x'(x,z) \right),\quad (f_q)_z'(x,z) = (\Phi_0)_{\xbar x}'' y .
\end{gathered}
\end{equation}
Recalling \eqref{eqn:f(x,z) explicit}, \eqref{eqn:f_q, f_l} and noting \eqref{eqn:det f_xz'' not zero}, we have more explicitly,
\begin{equation}
\label{eqn:kappa explicit}
    \kappa\left(  y,\frac{2}{i}(\Phi_0)_{x\xbar}'' z \right) = \left( x,\frac{2}{i}(f_{xx}''x + f_{xz}''z + l_x) \right),\quad x=(f_{zx}'')^{-1}((\Phi_0)_{\xbar x}''y - f_{zz}''z - l_z)
\end{equation}
and
\begin{equation}
\label{eqn:kappa_q explicit}
    \kappa_q \left(  y,\frac{2}{i}(\Phi_0)_{x\xbar}'' z \right) = \left( x,\frac{2}{i}(f_{xx}''x + f_{xz}''z) \right),\quad x = (f_{zx}'')^{-1}((\Phi_0)_{\xbar x}''y - f_{zz}''z).
\end{equation}
We see that $\kappa_q$ is a complex linear canonical transformation and that $\kappa$ is a complex affine canonical transformation. Moreover, $\kappa$ admits the following factorization
\begin{equation}
\label{eqn:kappa factorization}
    \kappa = \kappa_l \circ \kappa_q,
\end{equation}
where $\kappa_l$ is a complex phase space translation
\begin{equation}
\label{eqn:kappa_l}
    \kappa_l:\CC^{2n}\owns\rho = (x,\xi) \mapsto \rho + H_{m_l}\in\CC^{2n},\quad H_{m_l} = ((m_l)_\xi',-(m_l)_x')
\end{equation}
with a linear function $m_l$ on $\CC^{2n}$ given by
\begin{equation}
\label{eqn:m_l}
    m_l(x,\xi) := \frac{2}{i}(f_{xx}'' (f_{zx}'')^{-1} l_z - l_x)\cdot x - (f_{zx}'')^{-1} l_z \cdot\xi,\quad (x,\xi)\in \CC^n\times\CC^n.
\end{equation}
We recall from \cite[Proposition 3.2]{coburn2019positivity} that \eqref{eqn:f_q inequality} implies that the canonical transformation $\kappa_q$ is positive relative to $\Lambda_{\Phi_0}$. Applying \cite[Theorem 1.1]{coburn2019positivity}, we obtain
\begin{equation}
\label{eqn:Lambda_Phi1}
    \kappa_q(\Lambda_{\Phi_0}) = \Lambda_{\Phi_1},
\end{equation}
where $\Phi_1$ is a strictly plurisubharmonic quadratic form satisfying
\begin{equation}
\label{eqn:Phi1 less than Phi0}
    \Phi_1 \leq \Phi_0.
\end{equation}
We make the following observation on the intersection $\Lambda_{\Phi_1}\cap \Lambda_{\Phi_0}$:
\begin{prop}
\label{prop:m_l vanishes on intersection}
Let $\Lambda_{\Phi_1} = \kappa_q(\Lambda_{\Phi_0})$, with $\kappa_q$ given by \eqref{eqn:kappa_q} or \eqref{eqn:kappa_q explicit}. Then the complex linear function $m_l(x,\xi)$ defined in \eqref{eqn:m_l} satisfies
\begin{equation}
\label{eqn:m_l=0 on intersection}
    \Im m_l\big\lvert_{\Lambda_{\Phi_1}\cap \Lambda_{\Phi_0}} = 0 .
\end{equation}
\end{prop}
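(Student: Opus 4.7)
The plan is to combine the two conditions from Proposition \ref{prop:conditions on f(x,z)}---the inequality \eqref{eqn:f_q inequality} and the vanishing \eqref{eqn:f_l condition} on its equality locus---with the explicit formulas for $\kappa_q$ and $m_l$. Concretely, I will first characterize $\Lambda_{\Phi_1}\cap\Lambda_{\Phi_0}$ via the equality case of \eqref{eqn:f_q inequality}, and then express $m_l$ restricted to $\Lambda_{\Phi_1}$ directly in terms of the linear part $f_l$; the conclusion will then be immediate from \eqref{eqn:f_l condition}.

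For the first step, I pick any $(x,\xi)\in\Lambda_{\Phi_1}\cap\Lambda_{\Phi_0}$. Writing $(x,\xi) = \kappa_q(y, \frac{2}{i}(\Phi_0)_{x\xbar}''\ybar)$ as in \eqref{eqn:kappa_q explicit} and using that $\xi = \frac{2}{i}(\Phi_0)_{x\xbar}''\xbar$ (since $(x,\xi)\in\Lambda_{\Phi_0}$), I extract the two identities
\[
(f_q)_x'(x,\ybar) = (\Phi_0)_{x\xbar}''\xbar, \qquad (f_q)_z'(x,\ybar) = (\Phi_0)_{\xbar x}''y.
\]
Since $f_q$ is a homogeneous quadratic polynomial, Euler's identity then yields
\[
2f_q(x,\ybar) = x\cdot(f_q)_x'(x,\ybar) + \ybar\cdot(f_q)_z'(x,\ybar) = \Phi_0(x)+\Phi_0(y),
\]
where the last equality uses $\Phi_0(x) = x\cdot(\Phi_0)_{x\xbar}''\xbar$. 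Taking real parts, the pair $(x,y)$ achieves equality in \eqref{eqn:f_q inequality}, and so \eqref{eqn:f_l condition} gives $\Re f_l(x,\ybar) = 0$.

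For the second step, I substitute the parameterization $\xi = \frac{2}{i}(f_{xx}''x + f_{xz}''\ybar)$ of $\Lambda_{\Phi_1}$ into the defining formula \eqref{eqn:m_l} for $m_l$. Using $(f_{zx}'')^T = f_{xz}''$ and the symmetry of $f_{xx}''$, the terms quadratic in $x$ cancel, and a short calculation yields
\[
m_l(x,\xi) = -\frac{2}{i}\bigl(l_x\cdot x + l_z\cdot\ybar\bigr) = 2i\, f_l(x,\ybar) \qquad \text{on } \Lambda_{\Phi_1}.
\]
Taking imaginary parts gives $\Im m_l(x,\xi) = 2\Re f_l(x,\ybar)$, which by Step 1 vanishes on $\Lambda_{\Phi_1}\cap\Lambda_{\Phi_0}$, proving the proposition.

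I expect the main conceptual hurdle to be Step 1---translating the geometric intersection condition into the equality case of \eqref{eqn:f_q inequality}. Euler's identity for $f_q$ is the trick that makes this translation transparent, once the two equations $(f_q)_x' = (\Phi_0)_{x\xbar}''\xbar$ and $(f_q)_z' = (\Phi_0)_{\xbar x}''y$ have been read off from the explicit form of $\kappa_q$. Step 2 is then a direct linear-algebra computation leveraging the fact that $m_l$ is tailored to undo precisely the affine data $l_x, l_z$ of $f$ when one passes to the image Lagrangian $\Lambda_{\Phi_1}$.
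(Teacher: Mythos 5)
Your proof is correct, and the overall strategy mirrors the paper's: first identify the intersection $\Lambda_{\Phi_1}\cap\Lambda_{\Phi_0}$ with the equality locus of \eqref{eqn:f_q inequality}, then compute $\Im m_l$ there and invoke \eqref{eqn:f_l condition}. Your Step 2 (the substitution of $\xi=\frac{2}{i}(f_{xx}''x+f_{xz}''\ybar)$ into \eqref{eqn:m_l}, the cancellation of the $f_{xx}''$ terms via symmetry, and the identity $(f_{zx}'')^{-1}l_z\cdot f_{xz}''\ybar = l_z\cdot\ybar$) is precisely the paper's computation, landing on $\Im m_l = 2\Re f_l(x,\ybar)$.

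The one genuine difference is in Step 1. The paper appeals to the graph characterization of $\kappa_q$ from \cite[Proposition 3.1]{coburn2019positivity}, namely $2\Re f_q(x,\wbar) = \Phi_1(x)+\Phi_0(w)$ along the graph, and then uses the matching of gradients plus homogeneity to argue $\Phi_1=\Phi_0$ on $\pi_x(\Lambda_{\Phi_1}\cap\Lambda_{\Phi_0})$. You instead read off the two Lagrangian conditions $(f_q)_x'(x,\ybar)=(\Phi_0)_{x\xbar}''\xbar$ and $(f_q)_z'(x,\ybar)=(\Phi_0)_{\xbar x}''y$ directly from \eqref{eqn:kappa_q} and apply Euler's identity for the homogeneous quadratic $f_q$, obtaining the stronger complex identity $2f_q(x,\ybar)=\Phi_0(x)+\Phi_0(y)$ in one line. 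This is more self-contained: it avoids the external citation and the intermediate comparison $\Phi_1=\Phi_0$, and it makes visible why equality in \eqref{eqn:f_q inequality} is forced at intersection points. Both routes are valid; yours is slightly more economical, the paper's has the advantage of reusing machinery (\eqref{eqn:graph of kappa_q}) already in circulation from \cite{coburn2019positivity}.
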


\begin{proof}
Recalling \eqref{eqn:LambdaPhi0} and \eqref{eqn:Phi0} we get
\begin{equation}
\label{eqn:LambdaPhi0 explicit}
    \Lambda_{\Phi_0} = \left\{ \left( x , \frac{2}{i}(\Phi_0)_{x\xbar}'' \,\xbar \right) ; x\in\CC^n \right\}.
\end{equation}
In view of \eqref{eqn:Lambda_Phi1} and \eqref{eqn:LambdaPhi0 explicit}, the points on the intersection $\Lambda_{\Phi_1}\cap \Lambda_{\Phi_0}$ are of the form:
\begin{equation}
\label{points on intersection}
    \Lambda_{\Phi_1}\cap \Lambda_{\Phi_0} \owns (x,\xi) = \left( x , \frac{2}{i}(\Phi_0)_{x\xbar}'' \,\xbar \right) = \kappa_q \left( w , \frac{2}{i}(\Phi_0)_{x\xbar}'' \,\wbar \right),\quad x,   w\in\CC^n.
\end{equation}
Using \eqref{eqn:kappa_q explicit}, we see that \eqref{points on intersection} $\implies (\Phi_0)_{x\xbar}'' \,\xbar = f_{xx}''x + f_{xz}''\wbar$. It then follows that for $(x,\xi)\in \Lambda_{\Phi_1}\cap \Lambda_{\Phi_0}$, by \eqref{eqn:m_l} we have
\[
\begin{split}
    m_l(x,\xi) &= \frac{2}{i}(f_{xx}'' (f_{zx}'')^{-1} l_z - l_x)\cdot x - (f_{zx}'')^{-1} l_z \cdot \frac{2}{i}((\Phi_0)_{x\xbar}'' \,\xbar) \\
    &= 2i\big( l_x \cdot x + (f_{zx}'')^{-1} l_z\cdot( (\Phi_0)_{x\xbar}'' \,\xbar - f_{xx}'' x ) \big) \\
    &= 2i\big( l_x \cdot x + (f_{zx}'')^{-1} l_z\cdot( f_{xz}''\wbar ) \big) = 2i(l_x\cdot x + l_z \cdot \wbar).
\end{split}
\]
Recalling \eqref{eqn:f_q, f_l}, we obtain that for $(x,\xi)$ satisfying \eqref{points on intersection},
\begin{equation}
\label{eqn:Im m_l on intersection}
    \Im m_l(x,\xi) = 2\Re (l_x\cdot x + l_z \cdot \wbar) = 2\Re f_l(x,\wbar).
\end{equation}
Arguing as in the proof of \cite[Proposition 3.1]{coburn2019positivity}, we see that
\begin{equation}
\label{eqn:graph of kappa_q}
    \left( x , \frac{2}{i}\frac{\partial\Phi_1}{\partial x}(x) \right) = \kappa_q \left( w , \frac{2}{i}\frac{\partial\Phi_0}{\partial w}(w) \right) \iff 2 \Re f_q (x,\wbar) = \Phi_1(x) + \Phi_0(w).
\end{equation}
We note that for $(x,\xi)\in \Lambda_{\Phi_1}\cap\Lambda_{\Phi_0}$, $\frac{\partial\Phi_1}{\partial x}(x) = \frac{\partial\Phi_0}{\partial x}(x)$, thus by homogeneity we get
\begin{equation}
\label{eqn:intersection Phi1=Phi0}
     \Phi_1(x) = \Phi_0(x),\quad x\in\pi_x (\Lambda_{\Phi_1}\cap\Lambda_{\Phi_0}).
\end{equation}
Combining \eqref{eqn:graph of kappa_q} and \eqref{eqn:intersection Phi1=Phi0}, we have, for $x,w\in\CC^n$ satisfying \eqref{points on intersection},
\begin{equation}
\label{describe intersection}
    \Phi_0(x) + \Phi_0(w) - 2\Re f_q(x,\wbar) = 0.
\end{equation}
We then conclude \eqref{eqn:m_l=0 on intersection} from \eqref{eqn:Im m_l on intersection}, \eqref{describe intersection} and \eqref{eqn:f_l condition}.
\end{proof}

\section{Weyl symbols and proof of Theorem 1.1}
\label{sec:Weyl symbols}

Let us recall that our Toeplitz operators can be represented by Weyl quantizations
\begin{equation}
\label{eqn:Top=Weyl 2}
    \Top(e^q) = a^{\text{w}} (x,D_x),
\end{equation}
where $a\in\CI(\Lambda_{\Phi_0})$ is the Weyl symbol of the Toeplitz operator $\Top(e^q)$, given by \eqref{eqn:Weyl symbol}. We shall recall from \cite{coburn2019positivity} that
\begin{equation}
\label{eqn:a(x,xi) by integral}
    a(x,\xi) = C_{\Phi_0} \int_{\CC^n} e^{-4\Phi_0^{\text{herm}}(x-y) + q(y)} L(dy),\quad C_{\Phi_0} \neq 0,\quad (x,\xi)\in\Lambda_{\Phi_0},
\end{equation}
where the integral converges thanks to \eqref{assumption 1}. Recalling \eqref{eqn:Phi0} and \eqref{eqn:Psi0}, we obtain, with $\xi = \frac{2}{i}\frac{\partial\Phi_0}{\partial x} = \frac{2}{i} (\Phi_0)_{x\xbar}''\xbar$,
\begin{equation}
\label{eqn:a(x,xi) by integral 2}
\begin{split}
    a(x,\xi) &= C_{\Phi_0} e^{-2ix\cdot\xi} \int_{\CC^n} e^{-4\Phi_0(y) + q(y) + 4 \Psi_0(x,\ybar) + 4\Psi_0(y,\xbar)} L(dy) \\
    &= C_{\Phi_0} e^{-2ix\cdot\xi} \iint_{\Gamma} e^{Q(y,\theta)-4\Psi_0(y,\theta) + 4\Psi_0(x,\theta) + 2i y\cdot \xi} dy d\theta,
\end{split}
\end{equation}
where $\Gamma$ is the contour in $\CC^{2n}$, given by $\theta=\ybar$, and $\Psi_0$, $Q$ are the polarizations of $\Phi_0$, $q$ respectively. Applying \cite[Proposition 2.1]{coburn2023characterizing} together with \eqref{assumption 1}, we see that, with $Q_2$ denoting the quadratic part of $Q$,
\begin{equation}
\label{non-degeneracy Q-4Psi}
    \CC_{y,\theta}^{2n} \owns (y,\theta) \mapsto Q_2(y,\theta) - 4\Psi_0(y,\theta)\ \,\text{is non-degenerate.}
\end{equation}
The method of quadratic stationary phase \cite[Lemma 13.2]{zworski2012semiclassical} then shows that
\begin{equation}
\label{eqn:a(x,xi) by vc}
    a(x,\xi) = C e^{-2ix\cdot\xi + \vc_{y,\theta}( Q(y,\theta)-4\Psi_0(y,\theta) + 4\Psi_0(x,\theta) + 2i y\cdot \xi )}
\end{equation}
for some constant $C\neq 0$. We can thereby write
\begin{equation}
\label{eqn:a(x,xi) by F and alpha}
    a(x,\xi) = C e^{i(F(x,\xi)+\alpha(x,\xi))},\quad(x,\xi)\in\Lambda_{\Phi_0},
\end{equation}
where $F$ is a holomorphic quadratic form on $\CC^{2n}$ and $\alpha$ is a complex linear function on $\CC^{2n}$ such that
\begin{equation}
\label{eqn:F,alpha}
    F(x,\xi)+\alpha(x,\xi) = -2x\cdot\xi + \frac{1}{i} \vc_{y,\theta}( Q(y,\theta)-4\Psi_0(y,\theta)+ 4\Psi_0(x,\theta) + 2i y\cdot \xi ).
\end{equation}
We remark that \eqref{eqn:a(x,xi) by F and alpha} extends to $(x,\xi)\in\CC^{2n}$ since $\Lambda_{\Phi_0}$ is maximally totally real. We now write
\begin{equation}
\label{eqn: a Weyl quantization}
    a^\text{w} (x,D_x) u(x) = \frac{1}{(2\pi)^n} \iint e^{i( (x-y)\cdot\xi + F((x+y)/2,\xi) + \alpha((x+y)/2,\xi) )} u(y) dy d\xi,
\end{equation}
and following \cite{coburn2021weyl}, $a^\text{w} (x,D_x)$ can be viewed as a Fourier integral operator with a non-degenerate phase function in the sense of H\"{o}rmander, given by
\[
    \tilde{\phi}(x,y,\xi) = (x-y)\cdot\xi + F((x+y)/2,\xi) + \alpha((x+y)/2,\xi).
\]
Here $\tilde{\phi}$ defines a canonical relation, with $\xi\in\CC^n$ viewed as the fiber variables,
\begin{equation}
\label{eqn:tilde kappa defn}
    \Tilde{\kappa} : \CC^{2n} \owns (y,-\tilde{\phi}_y'(x,y,\xi)) \mapsto (x,\tilde{\phi}_x'(x,y,\xi)) \in \CC^{2n},\quad \tilde{\phi}_{\xi}'(x,y,\xi)=0.
\end{equation}
We recall from \cite[Section 2]{coburn2021weyl} that $\Tilde{\kappa}$ can be rewritten in the form
\begin{equation}
\label{eqn:tilde kappa in rho}
    \tilde\kappa : \rho + \frac{1}{2}H_{F+\alpha}(\rho) \mapsto \rho - \frac{1}{2} H_{F+\alpha}(\rho),\quad\rho = \left( \frac{x+y}{2} , \xi\right)\in\CC^{2n},
\end{equation}
where $H_{F+\alpha}(\rho) = (F_\xi'(\rho) + \alpha_\xi' , -F_x'(\rho) - \alpha_x')$ is the Hamilton vector field of $F+\alpha$. We make the following observation, suggested by \eqref{eqn:Top=Weyl 2}.

\begin{prop}
\label{prop:kappa = tilde kappa}
Let $\tilde\kappa$ be a canonical relation given by \eqref{eqn:tilde kappa in rho}, with $F$ and $\alpha$ satisfying \eqref{eqn:F,alpha}. Let $\kappa$ be the canonical transformation associated to $\Top(e^q)$, given by \eqref{eqn:kappa using f(x,z)}. Then
\begin{equation}
\label{eqn:tilde kappa = kappa}
    \tilde\kappa = \kappa.
\end{equation}
\end{prop}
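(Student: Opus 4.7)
The equality $\tilde\kappa = \kappa$ reflects the general principle that the canonical transformation associated with a Fourier integral operator is a geometric invariant, independent of the particular non-degenerate phase function used to represent the operator. Here $\tilde\kappa$ comes from the Weyl representation \eqref{eqn: a Weyl quantization} of $\Top(e^q) = a^{\text{w}}(x,D_x)$, while $\kappa$ comes from the integral representation \eqref{eqn:Top alt form} of the same operator, so the two should coincide. The plan is to verify this by a direct computation, extending the homogeneous-case calculation carried out in \cite[Section 2]{coburn2023characterizing} to the inhomogeneous setting.

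I would exploit the factorization $\kappa = \kappa_l \circ \kappa_q$ in \eqref{eqn:kappa factorization}, together with the analogous splitting of $\tilde\kappa$ into its linear symplectic part (generated by the quadratic form $F$) and its translation part (generated by the linear form $\alpha$). The symplectic part $\kappa_q$ is governed by $f_q$, which arises from the critical value of $Q_2(y,\theta) - 2\Psi_0(y,\theta) + 2\Psi_0(x,\theta) + 2\Psi_0(y,z)$ in $(y,\theta)$. On the Weyl side, the quadratic form $F(x,\xi)$ in \eqref{eqn:F,alpha} is the critical value of $Q_2(y,\theta) - 4\Psi_0(y,\theta) + 4\Psi_0(x,\theta) + 2i y\cdot\xi$. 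The equality of the two resulting canonical transformations in this purely quadratic situation is precisely the content of the analogous statement in \cite[Section 2]{coburn2023characterizing}, so $\kappa_q$ already matches the linear symplectic part of $\tilde\kappa$. It then remains to identify the two translation components: the shift $\kappa_l$ built from $(l_x, l_z)$ via \eqref{eqn:kappa_l}--\eqref{eqn:m_l} should agree with the constant shift by $-H_\alpha$ that arises from the linear form $\alpha$ in the formula \eqref{eqn:tilde kappa in rho}.

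This last step reduces to extracting the linear-in-$(x,\xi)$ terms from \eqref{eqn:F,alpha} and the linear-in-$(x,z)$ terms from \eqref{eqn:f(x,z) by vc}. In each case one writes the critical point $(y^*,\theta^*)$ as a linear function of the external variables plus a constant coming from the inhomogeneous coefficients $a, b$ of $Q$ in \eqref{eqn:Q explicit}, and reads off explicit formulas for $\alpha$ and $(l_x,l_z)$ in terms of the inverses of the corresponding $2n\times 2n$ block matrices. The main obstacle is the bookkeeping: the two systems involve the matrices $2(\Phi_0)_{\xbar x}'' - Q_{\theta y}''$ and $4(\Phi_0)_{\xbar x}'' - Q_{\theta y}''$ in analogous blocks, so matching the two translations requires careful Schur complement manipulations (as in the proof of Proposition \ref{prop:f_xz'' invertible} and in \cite[Lemma 3.1]{sjostrand2007elementary}). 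Once the two linear shifts are reconciled, the identity $\tilde\kappa=\kappa$ follows.
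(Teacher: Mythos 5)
Your plan is a genuinely different route from the paper's, and while it can be made to work, it has a logical subtlety you should address. The paper never factors $\kappa$ and $\tilde\kappa$ into linear-plus-translation in order to prove Proposition \ref{prop:kappa = tilde kappa}. Instead it observes that $\kappa$ is generated by the phase in \eqref{eqn:Top integral rep.}, yielding the explicit formula \eqref{eqn:kappa using Q} parameterized by $(y,\theta)\in\CC^{2n}$; then it computes $H_{F+\alpha}$ from the critical-value expression \eqref{eqn:F,alpha} and, substituting the critical-point equations \eqref{eqn:critical point}, rewrites $\tilde\kappa$ in exactly the same form \eqref{eqn:tilde kappa rewritten}. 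Since the map $(x,\xi)\mapsto(y(x,\xi),\theta(x,\xi))$ is a bijection (thanks to the invertibility of $\widetilde{\mathcal{A}}$, a consequence of \eqref{non-degeneracy Q-4Psi}), both relations are parameterized by the full range of $(y,\theta)$ and therefore coincide. The entire inhomogeneous contribution is carried along automatically; there is no separate matching of translation parts.

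Your factorization-based plan has the following issue: splitting $\tilde\kappa$ as $\kappa_\alpha\circ\kappa_F$ requires knowing that $\tilde\kappa$ is an affine \emph{transformation} rather than merely a canonical \emph{relation}, i.e., that $\det(1\pm\tfrac12\mathcal{F})\neq 0$ as in \eqref{eqn:I + or - F}. In the paper, \eqref{eqn:I + or - F} is deduced \emph{from} Proposition \ref{prop:kappa = tilde kappa} together with Proposition \ref{prop:f_xz'' invertible}, not assumed before it. You could repair this by first invoking the quadratic-case result to identify $\kappa_F$ with $\kappa_q$ (which is a transformation), thereby obtaining \eqref{eqn:I + or - F} before proceeding to the translation comparison, but you would need to state this explicitly. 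Also note that the constant shift in \eqref{eqn:kappa_alpha} is $-\tfrac12 H_{\alpha\circ\kappa_F^{-1}+\alpha}$, not simply $-H_\alpha$; getting this right matters for the bookkeeping you defer to Schur-complement manipulations, and those computations are the bulk of the argument you have not actually carried out. The paper's parametrize-by-$(y,\theta)$ approach buys you a shorter proof precisely because it bypasses both of these complications.
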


\begin{proof}
Recalling the argument at the end of Section \ref{sec:Toeplitz}, we observe that $\kappa$ can be viewed as a canonical transformation generated by the phase function in \eqref{eqn:Top integral rep.}, that is
\[
\Phi(x,y,\theta) = \frac{2}{i} (\Psi_0(x,\theta)-\Psi_0(y,\theta)) + \frac{1}{i} Q(y,\theta),
\]
and $\kappa$ is then given by
\[
    \kappa : \CC^{2n}\owns (y,-\Phi_y'(x,y,\theta))\mapsto (x,\Phi_x'(x,y,\theta))\in\CC^{2n},\quad\Phi_\theta'(x,y,\theta) = 0.
\]
More explicitly, using \eqref{eqn:Psi0} we get
\begin{equation}
\label{eqn:kappa using Q}
    \kappa : \left( y,\frac{2}{i}(\Phi_0)_{x\xbar}''\theta - \frac{1}{i}Q_y'(y,\theta) \right) \mapsto \left( y - \frac{1}{2}((\Phi_0)_{\xbar x}'')^{-1}Q_\theta'(y,\theta),\frac{2}{i}(\Phi_0)_{x\xbar}''\theta \right).
\end{equation}
We now denote by $(y,\theta) = ((y(x,\xi),\theta(x,\xi))$ the unique (thanks to \eqref{non-degeneracy Q-4Psi}) critical point achieving the critical value in \eqref{eqn:a(x,xi) by vc}. Using \eqref{eqn:F,alpha} together with \eqref{eqn:Psi0} we then see that
\begin{equation}
\label{eqn:H_(F+alpha)}
    H_{F+\alpha} (x,\xi) = (2y - 2x, 4i(\Phi_0)_{x\xbar}''\theta + 2\xi).
\end{equation}
Letting $\rho = (x,\xi)\in\CC^{2n}$, then we have
\[
\rho+\frac{1}{2} H_{F+\alpha}(\rho) \mapsto \rho - \frac{1}{2} H_{F+\alpha}(\rho) \iff (y,2i(\Phi_0)_{x\xbar}''\theta + 2\xi)\mapsto (2x - y, -2i(\Phi_0)_{x\xbar}''\theta).
\]
This means that, in view of \eqref{eqn:tilde kappa in rho},
\begin{equation}
\label{eqn:tilde kappa y theta}
    \tilde\kappa : (y,2i(\Phi_0)_{x\xbar}''\theta + 2\xi)\mapsto (2x - y, -2i(\Phi_0)_{x\xbar}''\theta),\quad (y,\theta) = ((y(x,\xi),\theta(x,\xi)).
\end{equation}
We note that the critical point $(y,\theta) = (y(x,\xi),\theta(x,\xi))$ corresponding to the critical value in \eqref{eqn:F,alpha} satisfies
\begin{equation}
\label{eqn:critical point}
    Q_y'(y,\theta) - 4(\Phi_0)_{x\xbar}''\theta + 2i\xi = 0,\quad Q_\theta'(y,\theta) - 4(\Phi_0)_{\xbar x}'' y + 4(\Phi_0)_{\xbar x}'' x = 0.
\end{equation}
It follows that
\[
    2i(\Phi_0)_{x\xbar}''\theta + 2\xi = \frac{2}{i}(\Phi_0)_{x\xbar}''\theta - \frac{1}{i}Q_y'(y,\theta),\quad 2x - y = y - \frac{1}{2}((\Phi_0)_{\xbar x}'')^{-1}Q_\theta'(y,\theta).
\]
We can then rewrite \eqref{eqn:tilde kappa y theta}, with $(y,\theta) = (y(x,\xi),\theta(x,\xi))$, $(x,\xi)\in\CC^{2n}$,
\begin{equation}
\label{eqn:tilde kappa rewritten}
    \tilde\kappa : \left( y,\frac{2}{i}(\Phi_0)_{x\xbar}''\theta - \frac{1}{i}Q_y'(y,\theta) \right) \mapsto \left( y - \frac{1}{2}((\Phi_0)_{\xbar x}'')^{-1}Q_\theta'(y,\theta),\frac{2}{i}(\Phi_0)_{x\xbar}''\theta \right).
\end{equation}
Recalling \eqref{eqn:Q explicit}, the equations \eqref{eqn:critical point} take the form
\begin{equation}
\label{eqn:critical point 2}
    \widetilde{\mathcal{A}} \begin{pmatrix}
        y \\ \theta
    \end{pmatrix} = \begin{pmatrix}
    4(\Phi_0)_{\xbar x}''x + b \\ 2i\xi + a
    \end{pmatrix},\quad \widetilde{\mathcal{A}} = \begin{pmatrix}
    4(\Phi_0)_{\xbar x}'' - Q_{\theta y}'' & -Q_{\theta\theta}'' \\ -Q_{yy}'' & 4(\Phi_0)_{x\xbar}'' - Q_{y\theta}''
    \end{pmatrix}.
\end{equation}
Here $\widetilde{\mathcal{A}}$ is invertible thanks to \eqref{non-degeneracy Q-4Psi}, we then conclude that
\[
    \CC^{2n}\owns (x,\xi) \mapsto (y,\theta) = (y(x,\xi),\theta(x,\xi))\in\CC^{2n}\,\text{ is a bijectiive map},
\]
thus \eqref{eqn:tilde kappa = kappa} follows by comparing \eqref{eqn:kappa using Q} and \eqref{eqn:tilde kappa rewritten}
\end{proof}

\noindent
Introducing the fundamental matrix of $F$ as in \cite[Appendix B]{coburn2019positivity}
\[
    \mathcal{F} = \begin{pmatrix}
        F_{\xi x}'' & F_{\xi\xi}'' \\ -F_{xx}'' & -F_{x\xi}''
    \end{pmatrix},
\]
we see that Hamilton vector field of $F$ is given by $H_F(\rho) = \mathcal{F}\rho$. Let us rewrite \eqref{eqn:tilde kappa in rho}:
\begin{equation}
\label{eqn:tilde kappa in rho rewritten}
    \tilde\kappa : \left( 1+ \frac{1}{2}\mathcal{F}\right)\rho + \frac{1}{2} H_\alpha \mapsto \left( 1 - \frac{1}{2}\mathcal{F}\right)\rho - \frac{1}{2} H_\alpha .
\end{equation}
In view of Proposition \ref{prop:f_xz'' invertible}, $\kappa$ is a complex affine canonical transformation. Therefore, Proposition \ref{prop:kappa = tilde kappa} shows that $\tilde\kappa$ in the form \eqref{eqn:tilde kappa in rho rewritten} is a canonical transformation, thus bijective. In other words,
\begin{equation}
\label{eqn:I + or - F}
    \det \left( 1 \pm \frac{1}{2}\mathcal{F}\right) \neq 0,
\end{equation}
we then see that
\begin{equation}
\label{eqn:kappa F}
    \kappa_F : \CC^{2n}\owns\left( 1+ \frac{1}{2}\mathcal{F}\right)\rho \mapsto \left( 1 - \frac{1}{2}\mathcal{F}\right)\rho \in\CC^{2n}
\end{equation}
is a complex linear canonical transformation. Recalling the computation from \cite[Section 2]{coburn2021weyl}, we conclude that $\tilde\kappa$ can be factored as follows
\begin{equation}
\label{eqn:tilde kappa factorization}
    \tilde\kappa = \kappa_\alpha \circ \kappa_F,
\end{equation}
where $\kappa_F$ is given in \eqref{eqn:kappa F} and $\kappa_\alpha$ is a complex phase space translation given by
\begin{equation}
\label{eqn:kappa_alpha}
    \kappa_\alpha : \rho\mapsto \rho - \frac{1}{2} H_{\alpha\circ\kappa_F^{-1}+\alpha}.
\end{equation}
We note that there is a unique way to factor an affine transformation into a composition of a linear transformation and a translation. Therefore, Proposition \ref{prop:kappa = tilde kappa} implies that factorizations \eqref{eqn:kappa factorization}
and \eqref{eqn:tilde kappa factorization} are equal, more explicitly, recalling \eqref{eqn:kappa_l} and \eqref{eqn:kappa_alpha}, we obtain
\begin{equation}
\label{eqn:same factorization}
    \kappa_q = \kappa_F,\quad m_l = -\frac{1}{2} (\alpha\circ \kappa_F^{-1} + \alpha).
\end{equation}
We have remarked in Section \ref{sec:canonical trans} that $\kappa_q$ is positive relative to $\Lambda_{\Phi_0}$ due to \eqref{eqn:f_q inequality}, see \cite[Proposition 3.2]{coburn2019positivity}. Then \eqref{eqn:same factorization} shows that, with $\Lambda_{\Phi_1}$ defined in \eqref{eqn:Lambda_Phi1},
\begin{equation}
\label{eqn:kappa_F positive}
    \kappa_F \text{ is positive relative to }\Lambda_{\Phi_0},\quad \kappa_F(\Lambda_{\Phi_0}) = \Lambda_{\Phi_1}.
\end{equation}
It follows from \eqref{eqn:kappa_F positive} and \cite[Proposition B.1]{coburn2019positivity} together with \eqref{eqn:I + or - F} that
\begin{equation}
\label{eqn:Im F nonnegative}
    \Im F\big\lvert_{\Lambda_{\Phi_0}} \geq 0.
\end{equation}
In view of \eqref{eqn:kappa_F positive} and \eqref{eqn:Im F nonnegative}, we then recall from \cite[Proposition 2.3]{coburn2021weyl} that
\begin{equation}
\label{describe intersection 2}
    \Lambda_{\Phi_1} \cap \Lambda_{\Phi_0} = \left\{ \left(1-\frac{1}{2}\mathcal{F}\right)\rho ; \,\rho\in\Lambda_{\Phi_0},\ \Im F(\rho) = 0 \right\} \subset \CC^{2n}.
\end{equation}
Using \eqref{eqn:kappa F} and \eqref{eqn:same factorization} we compute directly to obtain
\begin{equation}
\label{eqn:alpha on intersection}
     m_l \left( \left(1-\frac{1}{2}\mathcal{F}\right)\rho \right) = -\frac{1}{2}(\alpha\circ\kappa_F^{-1} + \alpha)\left( \left(1-\frac{1}{2}\mathcal{F}\right)\rho \right) = -\alpha(\rho).
\end{equation}
We conclude from \eqref{eqn:m_l=0 on intersection}, \eqref{describe intersection 2}, and \eqref{eqn:alpha on intersection} that
\begin{equation}
\label{eqn:Im alpha=0 when Im F=0}
    \rho\in\Lambda_{\Phi_0},\ \Im F(\rho) = 0 \implies \Im \alpha (\rho) = 0.
\end{equation}
The expression \eqref{eqn:a(x,xi) by F and alpha} for the Weyl symbol, together with \eqref{eqn:Im F nonnegative} and \eqref{eqn:Im alpha=0 when Im F=0}, allow us to conclude that
\[
    a = C e^{i(F+l)} \in L^\infty (\Lambda_{\Phi_0}),
\]
which completes the proof of Theorem \ref{thm:1}.

\section{Compact Toeplitz operators and proof of Theorem 1.2}
\label{sec:compactness}

In this section we shall prove Theorem \ref{thm:2}. Let $\Phi_0$ be a strictly plurisubharmonic quadratic form on $\CC^n$ and let $q$ be an inhomogeneous complex valued quadratic polynomial on $\CC^n$ whose principal part $q_2$ satisfies \eqref{assumption 1}, \eqref{assumption 2}. We first verify that the compactness of the Toeplitz operator $\Top(e^q) : H_{\Phi_0}(\CC^n) \to H_{\Phi_0}(\CC^n)$ is implied by the vanishing of the corresponding Weyl symbol given in \eqref{eqn:Weyl symbol} at infinity.

\noindent
We recall from \eqref{eqn:a(x,xi) by F and alpha} that the Weyl symbol $a$ of $\Top(e^q)$ has the form
\[
    a(x,\xi) = C e^{i(F(x,\xi)+\alpha(x,\xi))},\quad(x,\xi)\in\Lambda_{\Phi_0},
\]
where $F$ is a holomorphic quadratic form on $\CC^{2n}$ and $\alpha$ is a complex linear function on $\CC^{2n}$. The vanishing of $a$ at infinity is equivalent to the ellipticity of $\Im F|_{\Lambda_{\Phi_0}}$:
\begin{equation}
\label{eqn:Im F elliptic}
    \Im F\left( x,\frac{2}{i}\frac{\partial\Phi_0}{\partial x}(x) \right) \asymp |x|^2,\quad x\in\CC^n
\end{equation}
It follows from \cite[Proposition B.1]{coburn2019positivity} that \eqref{eqn:Im F elliptic} holds if and only if the canonical transformation $\kappa_F$ defined in \eqref{eqn:kappa F} is strictly positive relative to $\Lambda_{\Phi_0}$.

\noindent
We now assume that the Weyl symbol $a$ vanishes at infinity, thus $\kappa_F$ is strictly positive relative to $\Lambda_{\Phi_0}$. According to \cite[Theorem 1.1]{coburn2019positivity} we then get
\begin{equation}
\label{eqn:LambdaPhi1 in section 5}
    \kappa_F(\Lambda_{\Phi_0}) = \Lambda_{\Phi_1},
\end{equation}
where $\Phi_1$ is a strictly plurisubharmonic quadratic form on $\CC^n$ such that
\begin{equation}
\label{eqn:Phi_1 inequality}
    \Phi_0(x) - \Phi_1(x) \asymp |x|^2,\quad x\in\CC^n .
\end{equation}
We now consider the I-Lagrangian R-symplectic affine plane $\tilde\kappa(\Lambda_{\Phi_0})$ with $\tilde\kappa$ given by \eqref{eqn:tilde kappa in rho}. Applying \cite[Lemma 2.2]{coburn2021weyl} together with \eqref{eqn:tilde kappa factorization}, \eqref{eqn:kappa_alpha}, \eqref{eqn:same factorization}, and \eqref{eqn:LambdaPhi1 in section 5} we have
\begin{equation}
\label{eqn:Lambda_Phi_2}
    \tilde\kappa (\Lambda_{\Phi_0}) = \Lambda_{\Phi_2},
\end{equation}
where $\Phi_2$ is a strictly plurisubharmonic quadratic polynomial on $\CC^n$ given by
\begin{equation}
\label{eqn:Phi_2}
    \Phi_2(x) = \Phi_1(x) + \Im \left(m_l \left( x,\frac{2}{i}\frac{\partial\Phi_1}{\partial x}(x)\right) \right).
\end{equation}
According to the general theory, see \cite{sjostrand1982singular}, \cite{caliceti2012quadratic}, we can conclude from \eqref{eqn:Lambda_Phi_2} that
\begin{equation}
\label{eqn:operator between Bargmann}
    \Top(e^q) = a^\text{w} (x,D_x) \in\mathcal{L} ( H_{\Phi_0}(\CC^n), H_{\Phi_2}(\CC^n) ),
\end{equation}
where
\[
    H_{\Phi_2}(\CC^n) = L^2(\CC^n, e^{-2\Phi_2}L(dx))\cap\Hol(\CC^n).
\]
Combing \eqref{eqn:Phi_1 inequality} with \eqref{eqn:Phi_2}, recalling from \eqref{eqn:m_l} that $m_l$ is a linear function, we see that there exists a constant $C>0$ such that
\begin{equation}
\label{eqn:Phi_2 inequality}
   C^{-1} |x|^2 - C  \leq \Phi_0(x) - \Phi_2(x) .
\end{equation}
It follows that the embedding: $H_{\Phi_2}(\CC^n) \to H_{\Phi_0}(\CC^n)$ is compact, thus by \eqref{eqn:operator between Bargmann} the Toeplitz operator $\Top(e^q) : H_{\Phi_0}(\CC^n) \to  H_{\Phi_0}(\CC^n)$ is compact.

\noindent
It remains to prove that the vanishing of the Weyl symbol at infinity is a necessary condition for the compactness of the Toeplitz operator $\Top(e^q)$ on $H_{\Phi_0}(\CC^n)$. Following \cite[Section 3]{coburn2023characterizing} we shall consider the compact operator $\Top(e^q)$ acting on the coherent states $k_w$, $w\in\CC^n$, defined in \eqref{eqn:kw(x)}. Let us recall a result from \cite[Lemma 3.1]{coburn2023characterizing}:
\begin{prop}
\label{prop:kw weak convergence}
Let $k_w$, $w\in\CC^n$ be defined by \eqref{eqn:kw(x)}. Then $k_w \to 0$ weakly in $H_{\Phi_0}(\CC^n)$, as $|w|\to \infty$.
\end{prop}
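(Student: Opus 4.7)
The plan is to identify $k_w$ with the normalized reproducing kernel of $H_{\Phi_0}(\CC^n)$ at $w$, and then exploit the resulting explicit inner-product formula together with the off-diagonal Gaussian decay \eqref{eqn:decay diagonal}. From the integral representation \eqref{eqn:Proj integral repres} of $\Pi_{\Phi_0}$, the reproducing kernel of $H_{\Phi_0}(\CC^n)$ is a constant multiple of $(x,y)\mapsto e^{2\Psi_0(x,\ybar)}$, with squared norm at $w$ a constant multiple of $e^{2\Phi_0(w)}$. Comparing with \eqref{eqn:kw(x)} and the normalization \eqref{eqn:kw normalized}, this identifies $k_w$ with $K(\cdot,w)/\|K(\cdot,w)\|_{H_{\Phi_0}}$, so the reproducing property gives
\begin{equation*}
    \langle u, k_w\rangle_{H_{\Phi_0}} = \frac{u(w)}{C_{\Phi_0}\, e^{\Phi_0(w)}}, \qquad u \in H_{\Phi_0}(\CC^n),
\end{equation*}
with $C_{\Phi_0}$ as in \eqref{eqn:kw(x)}. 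Thus $k_w \to 0$ weakly is equivalent to the pointwise decay $|u(w)|\, e^{-\Phi_0(w)} \to 0$ as $|w|\to\infty$ for every fixed $u\in H_{\Phi_0}(\CC^n)$.

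To verify this decay, I would apply the reproducing formula once more. Using \eqref{eqn:decay diagonal}, i.e. $2\Re\Psi_0(w,\ybar) - \Phi_0(w) - \Phi_0(y) \asymp -|y-w|^2$, one obtains
\begin{equation*}
    |u(w)|\, e^{-\Phi_0(w)} \leq C\int_{\CC^n} e^{-c|y-w|^2}\, |u(y)|\, e^{-\Phi_0(y)}\, L(dy)
\end{equation*}
for some constants $c,C>0$. The right-hand side is the convolution at $w$ of the Gaussian $G(y)=e^{-c|y|^2}$ with $|u|\,e^{-\Phi_0}$, and both factors lie in $L^2(\CC^n, L(dy))$: the latter since $\|\,|u|\,e^{-\Phi_0}\|_{L^2}^2 = \|u\|_{H_{\Phi_0}}^2<\infty$. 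Since the convolution of two $L^2$ functions on $\CC^n$ is continuous and vanishes at infinity, the right-hand side tends to $0$ as $|w|\to\infty$, which is the required decay.

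The argument is essentially mechanical once $k_w$ is recognized as a normalized reproducing kernel. The only ingredient requiring comment is the $L^2 * L^2 \subset C_0$ statement, which is standard (for example, by approximating both factors by Schwartz functions and applying Young's convolution inequality). Alternatively, one may split the integral above into $\{|y-w|\leq R\}$, which is small as $|w|\to\infty$ by dominated convergence applied to the integrable majorant $|u|^2 e^{-2\Phi_0}$, and $\{|y-w|>R\}$, which is small uniformly in $w$ by the Gaussian factor, then send $R\to\infty$. I do not anticipate a genuine obstacle beyond this bookkeeping.
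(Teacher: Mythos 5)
Your argument is correct. Note that the paper itself does not prove this proposition but simply quotes it from \cite[Lemma~3.1]{coburn2023characterizing}, so there is no in-paper proof to compare against line by line. Your self-contained route is sound: recognizing $k_w$ as the normalized reproducing kernel gives $|\langle u, k_w\rangle_{H_{\Phi_0}}| \asymp |u(w)|e^{-\Phi_0(w)}$, the reproducing identity combined with the off-diagonal estimate \eqref{eqn:decay diagonal} yields the Gaussian-convolution bound, and the standard fact $L^2 * L^2 \subset C_0$ (or your second, hands-on splitting argument, which is essentially its proof in this Gaussian case) finishes the job. The one minor inaccuracy is purely cosmetic: the exact constant in $\langle u,k_w\rangle = u(w)/(C_{\Phi_0}e^{\Phi_0(w)})$ depends on the normalization in \eqref{eqn:Proj integral repres} and the Jacobian converting $dy\,d\ybar$ to $L(dy)$, but only the modulus matters. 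For context, a common alternative route is to use that $\|k_w\|=1$ uniformly, so weak convergence to zero need only be checked on a dense set such as holomorphic polynomials, where $|p(w)|e^{-\Phi_0(w)}\to 0$ is immediate from $\Phi_0(w)\asymp|w|^2$; that version trades the convolution lemma for a density fact about $H_{\Phi_0}(\CC^n)$, while yours handles all $u$ at once with only elementary $L^2$ estimates. Either way the result holds, and your proposal contains no gap.
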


\noindent
The compactness of $\Top(e^q)$ with Proposition \ref{prop:kw weak convergence} implies that
\begin{equation}
\label{eqn:Top kw to zero}
    \| \Top(e^q) k_w\|_{H_{\Phi_0}(\CC^n)} \to 0,\quad\text{as }|w|\to\infty.
\end{equation}
Recalling \eqref{eqn:Top(eQ)kw norm} together with \eqref{eqn:f(x,z) explicit}, \eqref{eqn:f_q, f_l}, we claim that
\begin{equation}
\label{eqn:f_q strict inequality}
    2\Re f_q (x,\wbar) < \Phi_0(x) + \Phi_0(w),\quad (0,0)\neq (x,w)\in \CC^{2n}.
\end{equation}
We note that $\Top(e^q)\in\mathcal{L}(H_{\Phi_0}(\CC^n),H_{\Phi_0}(\CC^n))$ due to the compactness, thus Proposition \ref{prop:conditions on f(x,z)} shows that \eqref{eqn:f_q inequality} and \eqref{eqn:f_l condition} hold. In order to justify \eqref{eqn:f_q strict inequality}, We shall argue by contradiction. Suppose that \eqref{eqn:f_q strict inequality} does not hold, then by \eqref{eqn:f_q inequality} we have
\begin{equation}
\label{condition:x0,w0}
    \exists\, (0,0)\neq(x_0, w_0)\in\CC^{2n}\ \text{such that }2\Re f_q (x_0,\overline{w_0}) = \Phi_0(x_0) + \Phi_0(w_0).
\end{equation}
In view of \eqref{eqn:2Ref(x,0)}, we see that $w_0\neq 0$. It follows from \eqref{eqn:f_l condition} that
\begin{equation}
\label{condition 2:x0,w0}
    \Re f_l(x_0,\overline{w_0}) = 0.
\end{equation}
Observing that \eqref{condition:x0,w0} and \eqref{condition 2:x0,w0} are homogeneous, we have for $\lambda>0$,
\[
    2\Re f_q (\lambda x_0,\overline{\lambda w_0}) = \Phi_0(\lambda x_0) + \Phi_0(\lambda w_0),\quad \Re f_l(\lambda x_0,\overline{\lambda w_0}) = 0.
\]
Therefore, recalling \eqref{eqn:f(x,z) explicit}, we get
\begin{equation}
\label{eqn:sup inequality}
    \Phi_0(\lambda w_0) + 2\Re f_0 = 2\Re f(\lambda x_0,\overline{\lambda w_0}) - \Phi_0(\lambda x_0) \leq \sup_x(2\Re f(x,\overline{\lambda w_0}) - 2\Phi_0(x))
\end{equation}
We conclude from \eqref{eqn:Top(eQ)kw norm} and \eqref{eqn:sup inequality} that for any $\lambda>0$,
\[
    \| \Top(e^q) k_{\lambda w_0} \|_{H_{\Phi_0}(\CC^n)} = C e^{-\Phi_0(\lambda w_0)} e^{\sup_x(2\Re f(x,\overline{\lambda w_0}) - 2\Phi_0(x))} \geq C e^{2\Re f_0} = \widetilde{C}>0.
\]
This contradicts \eqref{eqn:Top kw to zero} as $w_0\neq 0 \implies |\lambda w_0| \to \infty$ if we take $\lambda \to \infty$.

\noindent
A straightforward modification of \cite[Proposition 3.2]{coburn2019positivity} -- see \cite[Section 3]{coburn2023characterizing} for more details, shows that \eqref{eqn:f_q strict inequality} guarantees the strict positivity relative to $\Lambda_{\Phi_0}$ of the canonical transform $\kappa_q$ given in \eqref{eqn:kappa_q}. We then recall \eqref{eqn:same factorization} to conclude that $\kappa_F$ in \eqref{eqn:kappa F} is strictly positive relative to $\Lambda_{\Phi_0}$. Hence \eqref{eqn:Im F elliptic} follows, which implies the vanishing of the Weyl symbol $a\in\CI(\Lambda_{\Phi_0})$ at infinity. This completes the proof of Theorem \ref{thm:2}.

\section{An explicit example}
\label{sec:an example}

In this section we illustrate Theorem \ref{thm:1} and Theorem \ref{thm:2} by analyzing the boundedness and compactness of an explicit family of metaplectic Toeplitz operators on the Bargmann space $H_{\Phi_0}(\CC^n)$, for a model weight
\begin{equation}
\label{eqn:Phi0 example}
    \Phi_0(x) =\frac{|x|^2}{4},\quad x\in\CC^n,
\end{equation}
and its polarization
\begin{equation}
\label{eqn:Psi0 example}
    \Psi_0(x,y) = \frac{1}{4} x\cdot y,\quad x, y \in\CC^n.
\end{equation}
Inspired by \cite{coburn2021weyl} and \cite{coburn2023characterizing}, let us consider
\begin{equation}
\label{eqn:q example}
    q(x) = \lambda |x|^2 + A \xbar\cdot\xbar + \overline{c}\cdot x - d\cdot\xbar,\quad x\in\CC^n.
\end{equation}
where $\lambda\in\CC$, $c,d\in\CC^n$, and $A$ is a complex symmetric $n\times n$ matrix. We shall assume that $\Re\lambda + \|A\| < 1/4$, with $\|A\|$ denoting the operator norm:
\[
    \|A\| = \sup\{ |Ax| : x\in\CC^n,\ |x| = 1 \},\quad\text{where }|x| = (|x_1|^2 + \cdots + |x_n|^2)^{1/2}.
\]
It follows that conditions \eqref{assumption 1}, \eqref{assumption 2} are satisfied. Instead of computing the Weyl symbol of $\Top(e^q)$ using \eqref{eqn:a(x,xi) by integral}, it is more convenient to explore the complex affine canonical transformation $\kappa$ associated to $\Top(e^q)$. Let us recall \eqref{eqn:kappa using Q} to write
\begin{equation}
\label{eqn:kappa example pre}
    \kappa : \CC^{2n}\owns (y,\eta) \mapsto \left((1-2\lambda)y - \frac{8A(i\eta+\overline{c})}{1-2\lambda} + 2d , \frac{\eta - i\overline{c}}{1-2\lambda} \right) \in\CC^{2n}.
\end{equation}
Letting $\gamma = 1/(1-2\lambda)$, we can rewrite \eqref{eqn:kappa example pre} as
\begin{equation}
\label{eqn:kappa example}
    \kappa : \CC^{2n}\owns (y,\eta) \mapsto \left(\frac{y}{\gamma} - 8i\gamma A\eta - 8\gamma A\overline{c} + 2d , \gamma\eta - i\gamma\overline{c} \right)\in\CC^{2n}.
\end{equation}
We can factor $\kappa=\kappa_l\circ\kappa_q$ in the sense of \eqref{eqn:kappa factorization} with
\begin{equation}
\label{eqn:kappa_q example}
    \kappa_q : \CC^{2n}\owns (y,\eta) \mapsto \left(\frac{y}{\gamma} - 8i\gamma A\eta , \gamma\eta\right)\in\CC^{2n},
\end{equation}
\begin{equation}
\label{eqn:kappa_l,m_l example}
    \kappa_l: \CC^{2n}\owns\rho \mapsto \rho+ H_{m_l}\in\CC^{2n},\quad m_l(x,\xi) = i\gamma\overline{c}\cdot x + (2d-8\gamma A\overline{c})\cdot \xi.
\end{equation}
As we have seen in the proof of Theorem \ref{thm:1}, $\Top(e^q)$ is a bounded operator on $H_{\Phi_0}(\CC^n)$ if and only if
\begin{equation}
\label{eqn:characterize boundedness example}
    \text{$\kappa_q$ is positive relative to $\Lambda_{\Phi_0}$ and $\Im m_l|_{\kappa_q(\Lambda_{\Phi_0})\cap\Lambda_{\Phi_0}} = 0$.}
\end{equation}
Recalling the proof of \cite[Theorem 4.1]{coburn2023characterizing}, we see that
$\kappa_q$ in \eqref{eqn:kappa_q example} is positive relative to $\Lambda_{\Phi_0}$ if and only if
\begin{equation}
\label{eqn:A,lambda,condition 1}
    4|\gamma|^2 \|A\| \leq 1 - |\gamma|^2,\quad \gamma = \frac{1}{1-2\lambda}.
\end{equation}
We shall also study the intersection $\kappa_q(\Lambda_{\Phi_0}) \cap \Lambda_{\Phi_0}$. Recalling \eqref{eqn:LambdaPhi0} and \eqref{eqn:Phi0 example} we have
\begin{equation}
\label{eqn:LambdaPhi0 example}
    \Lambda_{\Phi_0} = \left\{ \left( y,\frac{\ybar}{2i} \right) ; y\in\CC^n \right\} \subset \CC_y^n\times \CC_\eta^n = \CC^{2n}.
\end{equation}
Combining \eqref{eqn:kappa_q example} and \eqref{eqn:LambdaPhi0 example} we see that
\begin{equation}
\label{eqn:intersection example}
    \left(y,\frac{\ybar}{2i} \right) \in \kappa_q(\Lambda_{\Phi_0}) \cap \Lambda_{\Phi_0} \iff \exists\, w\in\CC^n\text{ s.t. }\left(\frac{w}{\gamma} - 4\gamma A\wbar , \frac{\gamma\wbar}{2i}\right) = \left(y,\frac{\ybar}{2i} \right)
\end{equation}
Eliminating $w$, we can rewrite \eqref{eqn:intersection example} as follows
\begin{equation}
\label{eqn:intersection 2 example}
    \kappa_q(\Lambda_{\Phi_0}) \cap \Lambda_{\Phi_0} = \left\{ \left(y,\frac{\ybar}{2i} \right);\,(1-|\gamma|^2)y = 4|\gamma|^2 A\ybar,\ y\in\CC^n \right\}.
\end{equation}
It follows from \eqref{eqn:kappa_l,m_l example} and \eqref{eqn:intersection 2 example} that $\Im m_l|_{\kappa_q(\Lambda_{\Phi_0})\cap\Lambda_{\Phi_0}} = 0$ precisely when
\begin{equation}
\label{eqn:A,lambda,c,d condition 2}
    y\in\CC^n,\ (1-|\gamma|^2)y = 4|\gamma|^2 A\ybar \implies  \Re(\gamma\overline{c}\cdot y + (4\gamma A \overline{c}-d)\cdot\ybar) = 0.
\end{equation}
Therefore, we conclude that \eqref{eqn:characterize boundedness example} holds precisely when \eqref{eqn:A,lambda,condition 1} and \eqref{eqn:A,lambda,c,d condition 2} hold.

\noindent
Let us now recall the argument in Section \ref{sec:compactness} that $\Top(e^q)$ is compact on $H_{\Phi_0}(\CC^n)$ if and only if the linear canonical transformation $\kappa_q$ given in \eqref{eqn:kappa_q example} is strictly positive relative to $\Lambda_{\Phi_0}$. Arguing as in the proof of \cite[Theorem 4.1]{coburn2023characterizing}, we see that the strict positivity of $\kappa_q$ relative to $\Lambda_{\Phi_0}$ holds precisely when the inequality in \eqref{eqn:A,lambda,condition 1} is strict.

\noindent
We conclude this section with the following result.
\begin{thm}
\label{thm:3}
Let $\Phi_0 = |x|^2 /4$, $x\in\CC^n$. Let $\lambda\in\CC$, $c, d\in\CC^n$, and let $A$ be an complex symmetric $n\times n$ matrix such that $\Re\lambda + \|A\| < 1/4$. Let us set
\[
    q(x) = \lambda |x|^2 + A \xbar\cdot\xbar + \overline{c}\cdot x - d\cdot\xbar,\quad x\in\CC^n.
\]
The Toeplitz operator
\[
    \Top(e^q) = \Pi_{\Phi_0}\circ e^q \circ \Pi_{\Phi_0} : H_{\Phi_0}(\CC^n) \to H_{\Phi_0}(\CC^n)
\]
is bounded if and only if
\begin{equation}
\label{A,lambda,inequality}
    4|\gamma|^2 \|A\| \leq 1-|\gamma|^2,\quad \gamma = \frac{1}{1-2\lambda},
\end{equation}
\begin{equation}
\label{c,d,condition}
    \Re(\gamma\overline{c}\cdot y + (4\gamma A \overline{c}-d)\cdot\ybar ) = 0,\quad \text{when }\,y\in\CC^n,\ (1-|\gamma|^2)y = 4|\gamma|^2 A\ybar.
\end{equation}
Moreover, $\Top(e^q)$ is compact on $H_{\Phi_0}(\CC^n)$ if and only if the inequality \eqref{A,lambda,inequality} is strict.
\end{thm}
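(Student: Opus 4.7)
The plan is to apply the general characterizations from Theorems \ref{thm:1} and \ref{thm:2} to the explicit data \eqref{eqn:Phi0 example}--\eqref{eqn:q example}, translating boundedness and compactness of $\Top(e^q)$ into linear-algebraic conditions on $\lambda, A, c, d$ via the canonical transformation $\kappa$ associated with the operator. The analysis in Sections \ref{sec:canonical trans} and \ref{sec:Weyl symbols} tells us that boundedness of $\Top(e^q)$ is equivalent to positivity of the quadratic-part canonical transformation $\kappa_q$ relative to $\Lambda_{\Phi_0}$ together with $\Im m_l|_{\kappa_q(\Lambda_{\Phi_0}) \cap \Lambda_{\Phi_0}} = 0$, and the argument in Section \ref{sec:compactness} gives that compactness is equivalent to strict positivity of $\kappa_q$ relative to $\Lambda_{\Phi_0}$.

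First I would compute $\kappa$ explicitly by substituting the polarization $Q(y,\theta) = \lambda y\cdot\theta + A\theta\cdot\theta + \overline{c}\cdot y - d\cdot\theta$ of $q$ and the polarization $\Psi_0(y,\theta) = y\cdot\theta/4$ of $\Phi_0$ into the formula \eqref{eqn:kappa using Q}. Solving the linear critical-point equations for $(y,\theta)$ in terms of $(x,\xi)$ gives the affine map \eqref{eqn:kappa example}, and the unique factorization $\kappa = \kappa_l \circ \kappa_q$ into a linear canonical transformation and a phase space translation produces $\kappa_q$ as in \eqref{eqn:kappa_q example} and $m_l$ as in \eqref{eqn:kappa_l,m_l example}.

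Next I would verify the positivity condition for $\kappa_q$. Parameterizing $\Lambda_{\Phi_0}$ by $y \mapsto (y, \ybar/2i)$ and following the strategy in the proof of Theorem 4.1 of \cite{coburn2023characterizing}, positivity relative to $\Lambda_{\Phi_0}$ reduces, after a short computation involving the symmetric matrix $A$ regarded as a complex bilinear form, to the operator-norm bound $4|\gamma|^2 \|A\| \leq 1 - |\gamma|^2$, with strict inequality corresponding to strict positivity. To pin down the intersection $\kappa_q(\Lambda_{\Phi_0}) \cap \Lambda_{\Phi_0}$, I would eliminate the auxiliary parameter $w$ from the defining pair of equations \eqref{eqn:intersection example}, arriving at the real-linear characterization \eqref{eqn:intersection 2 example}. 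Substituting $(y, \ybar/2i)$ with $(1 - |\gamma|^2) y = 4|\gamma|^2 A \ybar$ into the linear form $m_l$ and extracting the imaginary part then yields condition \eqref{c,d,condition}.

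Combining these gives: $\Top(e^q)$ is bounded if and only if \eqref{A,lambda,inequality} and \eqref{c,d,condition} hold, and it is compact if and only if \eqref{A,lambda,inequality} is strict. The main obstacle I anticipate is the positivity computation for $\kappa_q$, since it mixes the complex anti-linear action of $A$ (appearing through $\ybar$) with the linear action of $1/\gamma$; a singular-value/diagonalization argument paralleling \cite{coburn2023characterizing} should handle this cleanly.
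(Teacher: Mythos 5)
Your proposal is correct and follows the paper's argument essentially step for step: you compute $\kappa$ from the polarized phase via \eqref{eqn:kappa using Q}, factor it as $\kappa_l\circ\kappa_q$, invoke the positivity computation from the proof of \cite[Theorem~4.1]{coburn2023characterizing} to obtain \eqref{A,lambda,inequality} with strict inequality for strict positivity, describe $\kappa_q(\Lambda_{\Phi_0})\cap\Lambda_{\Phi_0}$ by eliminating $w$, and restrict $\Im m_l$ to that set to get \eqref{c,d,condition}. This is precisely the route taken in Section~\ref{sec:an example}.
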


\medskip

\noindent
\emph{Remark. }In the special case where $A = \mu I$, $\mu\in\CC$, there is a more explicit form of the condition \eqref{c,d,condition}. When $\mu=0$, \eqref{c,d,condition} reduces to
\[
    c = \gamma d,\quad\text{if}\ \,|\gamma|=1.
\]
When $\mu\neq 0$, we obtain by a direct calculation that \eqref{c,d,condition} holds precisely when
\[
    ie^{-\frac{i\theta}{2}}(\overline{\gamma}c+4\gamma\mu\overline{c}-d) \in\RR^n,\quad \text{if}\ \,\mu = \frac{1-|\gamma|^2}{4|\gamma|^2} e^{i\theta},\ \theta\in[0,2\pi).
\]



\begin{thebibliography}{10}

\bibitem{bauer2010heatflow}
W.~Bauer, L.~A. Coburn, and J.~Isralowitz, \emph{Heat flow, {BMO}, and the
  compactness of {T}oeplitz operators}, J. Funct. Anal. \textbf{259} (2010),
  no.~1, 57--78.

\bibitem{berger1994heat}
C.~A. Berger and L.~A. Coburn, \emph{Heat flow and {B}erezin-{T}oeplitz
  estimates}, Amer. J. Math. \textbf{116} (1994), no.~3, 563--590.

\bibitem{caliceti2012quadratic}
E.~Caliceti, S.~Graffi, M.~Hitrik, and J.~Sj\"{o}strand, \emph{Quadratic
  {P}{T}-symmetric operators with real spectrum and similarity to self-adjoint
  operators}, J. Phys. A \textbf{45} (2012), no.~44, 444007, 20.

\bibitem{Coburn2019fock}
L.~A. Coburn, \emph{Fock space, the {H}eisenberg group, heat flow, and
  {T}oeplitz operators}, Handbook of analytic operator theory, CRC
  Press/Chapman Hall Handb. Math. Ser., CRC Press, Boca Raton, FL, 2019,
  pp.~1--15.

\bibitem{coburn2019positivity}
L.~A. Coburn, M.~Hitrik, and J.~Sj\"{o}strand, \emph{Positivity, complex
  {FIO}s, and {T}oeplitz operators}, Pure Appl. Anal. \textbf{1} (2019), no.~3,
  327--357.

\bibitem{coburn2023characterizing}
L.~A. Coburn, M.~Hitrik, and J.~Sj\"{o}strand, \emph{Characterizing boundedness of metaplectic {T}oeplitz operators},
  arXiv preprint arXiv:2303.01558 (2023).

\bibitem{coburn2021weyl}
L.~A. Coburn, M.~Hitrik, J.~Sj\"{o}strand, and F.~White, \emph{Weyl symbols and
  boundedness of {T}oeplitz operators}, Math. Res. Lett. \textbf{28} (2021),
  no.~3, 681--696.

\bibitem{hitrik2018minicourse}
M.~Hitrik and J.~Sj\"{o}strand, \emph{Two minicourses on analytic microlocal
  analysis}, Algebraic and analytic microlocal analysis, Springer Proc. Math.
  Stat., vol. 269, Springer, Cham, 2018, pp.~483--540.

\bibitem{sjostrand1982singular}
J.~Sj\"{o}strand, \emph{Singularit\'{e}s analytiques microlocales},
  Ast\'{e}risque, vol.~95, Soc. Math. France, Paris, 1982, pp.~1--166.

\bibitem{sjostrand1996functionspaces}
J.~Sj\"{o}strand, \emph{Function spaces associated to global {$I$}-{L}agrangian
  manifolds}, Structure of solutions of differential equations
  ({K}atata/{K}yoto, 1995), World Sci. Publ., River Edge, NJ, 1996,
  pp.~369--423.

\bibitem{sjostrand2007elementary}
J.~Sj{\"o}strand and M.~Zworski, \emph{Elementary linear algebra for advanced
  spectral problems}, Ann. Inst. Fourier, vol.~57, 2007, pp.~2095--2141.

\bibitem{zworski2012semiclassical}
M.~Zworski, \emph{Semiclassical analysis}, Graduate Studies in Mathematics,
  vol. 138, American Mathematical Society, Providence, RI, 2012.

\end{thebibliography}
\end{document}